\definecolor{mygreen}{rgb}{0,0.6,0}
\DeclareMathOperator{\dn}{dn}
\DeclareMathOperator{\vecop}{vec}
\lst@CCPutMacro\lst@ProcessOther {"2D}{\lst@ttfamily{-{}}{-{}}}
\numberwithin{equation}{section}
\numberwithin{theorem}{section}
\title{Fast Poisson solvers for spectral methods}
\author{Daniel Fortunato\thanks{School of Engineering and Applied Sciences, Harvard University, Pierce Hall,
Cambridge, MA 02138. (\texttt{dfortunato@g.harvard.edu}) This work is supported by the National Defense Science and Engineering Graduate Fellowship.} \and Alex Townsend\thanks{Department of Mathematics, Cornell University, Ithaca, NY 14853. (\texttt{townsend@cornell.edu}) This work is supported by National Science Foundation grant No.~1645445.}}
\date{\today}
\begin{document}
\maketitle

\begin{abstract}
Poisson's equation is the canonical elliptic partial differential equation. While there exist fast Poisson solvers for finite difference and finite element methods, fast Poisson solvers for spectral methods have remained elusive. Here, we derive spectral methods for solving Poisson's equation on a square, cylinder, solid sphere, and cube that have an optimal complexity (up to polylogarithmic terms) in terms of the degrees of freedom required to represent the solution. Whereas FFT-based fast Poisson solvers exploit structured eigenvectors of finite difference matrices, our solver exploits a separated spectra property that holds for our spectral discretizations. Without parallelization, we can solve Poisson's equation on a square with 100 million degrees of freedom in under two minutes on a standard laptop.
\end{abstract}

\begin{keywords}
fast Poisson solvers, spectral methods, alternating direction implicit method, ultraspherical polynomials
\end{keywords}

\begin{AMS}
65N35, 35J05, 33C45
\end{AMS}

\section{Introduction}
Consider Poisson's equation on a square with zero homogeneous Dirichlet conditions:
\begin{equation}
u_{xx} + u_{yy} = f, \quad (x,y)\in [-1,1]^2,\qquad u(\pm1,\cdot) = u(\cdot,\pm 1) = 0,
\label{eq:PoissonSquare}
\end{equation}
where $f$ is a known continuous function and $u$ is the desired solution.
When~\eqref{eq:PoissonSquare} is discretized by the finite difference (FD) method with a five-point stencil on an $(n+1)\times (n+1)$ equispaced grid, there is a FFT-based algorithm that computes the values of the solution in an optimal\footnote{Throughout this paper, ``optimal complexity" means a computational complexity that is optimal up to polylogarithmic factors.} $\mathcal{O}(n^2 \log n)$ operations~\cite{Henrici_79_01}. Many fast Poisson solvers have been developed for low-order approximation schemes using uniform and nonuniform discretizations based on cyclic reduction~\cite{Buzbee_70_01}, the fast multipole method~\cite{McKenney_95_01}, and multigrid~\cite{Gholami_16_01}. This work began with a question:
\begin{quoting}
\centering Is there an optimal complexity spectral method for~\eqref{eq:PoissonSquare}?
\end{quoting}
We find that the answer is yes. In section~\ref{sec:PoissonSquare}, we describe a practical $\mathcal{O}(n^2(\log n)^2)$ algorithm based on the alternating direction implicit (ADI) method. We go on to derive optimal complexity spectral methods for Poisson's equation with homogeneous Dirichlet conditions for the cylinder and solid sphere in section~\ref{sec:PoissonPolar} and for the cube in section \ref{sec:PoissonCube}. In section \ref{sec:OtherBCs}, we extend our approach to Poisson's equation with Neumann and Robin boundary conditions. Optimal complexity spectral methods already exist for Poisson's equation on the disk~\cite{Wilber_16_01, Wilber_17_01} and surface of the sphere~\cite{Townsend_16_02}. This paper can be seen as an extension of that work. 

Our first idea for deriving an optimal complexity spectral method for~\eqref{eq:PoissonSquare} was to extend a fast Poisson solver from the FD literature. The FD discretization of~\eqref{eq:PoissonSquare} with a five-point stencil on an $(n+1)\times (n+1)$ equispaced grid can be written as the following Sylvester matrix equation:

\begin{equation}
KX + XK^T = F, \qquad K = -\frac{1}{h^2}\begin{bmatrix}2 & -1 \\ -1 & 2 & \ddots \\ &\ddots&\ddots &-1 \\  & & -1 & 2\end{bmatrix}\in\mathbb{R}^{(n-1)\times(n-1)},
\label{eq:FDequation}
\end{equation}
where $h = 2/n$, $X_{jk} = u(-1+kh,-1+jh)$, and $F_{jk} = f(-1+kh,-1+jh)$ for $1\leq j,k\leq n-1$. Here, the matrix $X$ represents the values of the solution on the interior nodes of the $(n+1)\times(n+1)$ equispaced grid. The eigendecomposition of $K$ is $K = S\Lambda S^{-1}$, where $S$ is the normalized discrete sine transformation (of type I) matrix~\cite[(2.24)]{LeVeque_07_01} and $\Lambda = \operatorname{diag}(\lambda_1,\ldots,\lambda_{n-1})$ with $\lambda_k = -4/h^2\sin^2(\pi k/(2n))$ for $1\leq k\leq n-1$~\cite[(2.23)]{LeVeque_07_01}. Substituting $K = S\Lambda S^{-1}$ into $KX+XK^T = F$ and rearranging, we find a simple formula for $X$:
\begin{equation}
X = S\left(C\circ (S^{-1}FS^{-T})\right)S^{T}, \qquad C_{jk} = \frac{1}{\lambda_j+\lambda_k},
\label{eq:FDsolution}
\end{equation}
where `$\circ$' is the Hadamard matrix product, i.e., $(A\circ B)_{jk} = A_{jk}B_{jk}$. Since $S = S^{T} = S^{-1}$ and matrix-vector products with $S$ can be computed in $\mathcal{O}(n\log n)$ operations using the FFT~\cite{Britanak_10_01}, $X$ can be computed via~\eqref{eq:FDsolution} in a total of $\mathcal{O}(n^2 \log n)$ operations.

Now suppose that $K$ in~\eqref{eq:FDequation} is replaced by a diagonalizable matrix $A$ so that~\eqref{eq:PoissonSquare} has a spectrally accurate discretization of the form $AX+XA^T = F$. Then, an analogous formula to~\eqref{eq:FDequation} still holds by using the eigendecomposition of $A$. However, the corresponding formula to~\eqref{eq:FDsolution} does not lead to a fast Poisson solver because the eigenvectors of $A$ are not known in closed form~\cite{Weideman_88_01}, and deriving an optimal matrix-vector product for the eigenvector matrix of $A$ is an ambitious project in itself. While FFT-based Poisson solvers exploit structured eigenvectors---which spectral discretization matrices do not possess---our method exploits the fact that the spectra of $A$ and $-A$ are separated (see section~\ref{sec:PoissonSquare}).

We have also considered extending other fast Poisson solvers based on (i) cyclic reduction, (ii) multigrid, (iii) the fast multipole method, and (iv) the Fourier method with polynomial subtraction. These efforts were unsuccessful for various reasons: (i) cyclic reduction is not applicable because spectral discretizations of~\eqref{eq:PoissonSquare} do not involve matrices with Toeplitz structure; (ii) multigrid methods seem fruitless because the number of multigrid cycles is prohibitive with spectrally accurate methods~\cite{Gholami_16_01}; (iii) the fast multipole method has a complexity that depends on the order of accuracy and is suboptimal in the spectral regime~\cite{Greengard_96_01, McKenney_95_01}; and, (iv) pseudospectral Fourier with polynomial subtraction can be employed to derived an arbitrary-order Poisson solver~\cite{Averbuch_98_01,Braverman_98_01}, but any approach based on uniform grids cannot be both numerically stable and spectrally accurate~\cite{Platte_11_01}.  We conclude that many of the approaches in the literature for fast Poisson solvers do not readily extend to practical optimal complexity spectral methods for solving~\eqref{eq:PoissonSquare}.

We did eventually find a fast Poisson solver based on the ADI method~\cite{Peaceman_55_01} that, with some tricks, extends from FD discretizations to spectral methods. The ADI method is an iterative method for solving Sylvester matrix equations of the form $AX-XB = F$. It is computationally efficient, compared to the $\mathcal{O}(n^3)$ Bartels--Stewart algorithm~\cite{Bartels_72_01}, when $A$ and $B$ have certain properties (see, for example, P1, P2, and P3 in section~\ref{sec:ADIMethod}).
By carefully designing spectral discretizations for Poisson's equation on the square (see section~\ref{sec:PoissonSquare}), cylinder (see section~\ref{sec:PoissonCylinder}), solid sphere (see section~\ref{sec:PoissonSphere}), and cube (see section~\ref{sec:PoissonCube}) as Sylvester matrix equations with desired properties, we are able to derive optimal complexity, spectrally accurate Poisson solvers.

In 1979, Haidvogel and Zhang derived a Chebyshev-tau spectral method that discretizes~\eqref{eq:PoissonSquare} as a Sylvester matrix equation of the form $AX+XA^T=F$ with the matrix $A$ being pentadiagonal except for two rows. They then applied the ADI method after precomputing the LU decomposition of $A$~\cite{Haidvogel_79_01}. However, they advocated against their ADI-based Poisson solver in favor of an $\mathcal{O}(n^3)$ algorithm, because their Sylvester matrix equation does not possess favorable properties for the ADI method and the precomputation costs $\mathcal{O}(n^3)$ operations. In section~\ref{sec:PoissonSquare}, we employ a spectral discretization of~\eqref{eq:PoissonSquare} that is specifically designed for the ADI method and requires no precomputation, so that we have a provable algorithmic complexity of $\mathcal{O}(n^2(\log n)^2)$.

A typical objection to the practical relevance of spectral methods for Poisson's equation on domains such as the square and cylinder is that the solution generically has weak corner singularities, which necessarily restricts the convergence rate of classical spectral methods to subexponential convergence~\cite[(2.39)]{Boyd_01_01}. Since our spectrally accurate Poisson solvers have optimal complexity, our computational cost is comparable to low-order methods with the same number of degrees of freedom. Therefore, this objection is no longer valid.

The paper is structured as follows: In section~\ref{sec:ADIMethod}, we review the ADI method for solving Sylvester matrix equations. In section~\ref{sec:PoissonSquare} we derive an optimal complexity, spectrally accurate Poisson solver for~\eqref{eq:PoissonSquare}.  In section~\ref{sec:PoissonPolar}, we use partial regularity to derive fast spectral methods for Poisson's equation on the cylinder and solid sphere before discussing how to do the cube in section~\ref{sec:PoissonCube}. In section~\ref{sec:OtherBCs}, we describe how our methods can be used to solve Poisson's equation with general boundary conditions.

For notational convenience, throughout the paper we discretize using the same number of degrees of freedom in each variable, though our code and algorithms do not have this restriction. All code used in the paper is publicly available~\cite{GithubRepo}. The Poisson solver on the square (see section~\ref{sec:PoissonSquare}) is implemented in Chebfun~\cite{Chebfun, Townsend_13_01} and can be accessed via the command {\tt chebfun2.poisson}. It is automatically executed in Chebop2~\cite{Townsend_15_01} when the user inputs Poisson's equation, and can handle rectangular domains and general Dirichlet boundary conditions (see section~\ref{sec:OtherBCs}).

\section{The alternating direction implicit method}
\label{sec:ADIMethod}
The alternating direction implicit method is an iterative algorithm, originally devised by Peaceman and Rachford~\cite{Peaceman_55_01}, which solves Sylvester matrix equations of the following form~\cite{Lu_91_01}:
\begin{equation}
A X - XB = F, \qquad A, B, F\in\mathbb{C}^{n\times n}
\label{eq:SylvesterEquation} 
\end{equation} 
where $A$, $B$, and $F$ are known and $X\in\mathbb{C}^{n\times n}$ is the desired solution. In general, the ADI method is executed in an iterative fashion where iterates $X_0,X_1,\ldots,$ are computed in the hope that $\|X-X_j\|_2 \rightarrow 0$ as $j\rightarrow \infty$. Algorithm~\ref{alg:ADI} summarizes the ADI method in this iterative form. At the start of the $j$th iteration, two shifts $p_j$ and $q_j$ are selected, and at the end of each iteration a test is performed to decide if the iterative method should be terminated. There are numerous strategies for selecting the shift parameters and determining when to terminate the iteration~\cite{Sabino_07_01}. In practice, selecting good shifts for each iteration is of crucial importance for the ADI method to rapidly converge.

\begin{figure}
\begin{algorithm}[H]
\caption{The standard ADI method to solve $AX-XB = F$}
\begin{algorithmic}[1]
\Require{$A, B, F\in\mathbb{C}^{n\times n}$}
\Ensure{$X_j\in\mathbb{C}^{n\times n}$, an approximate solution to $AX-XB = F$}
\State{$X_0 := 0$}
\State{$j := 0$}
\Do
\State{Select ADI shifts $p_j$ and $q_j$}
\State{Solve $X_{j+1/2}(B-p_jI) = F - (A-p_jI)X_j$ for $X_{j+1/2}$}
\State{Solve $(A-q_jI)X_{j+1} = F - X_{j+1/2}(B-q_jI)$ for $X_{j+1}$}
\State{$j := j+1$}
\DoWhile{not converged}
\State{\Return $X_j$}
\end{algorithmic}
\label{alg:ADI}
\end{algorithm}
\vspace{-1.8em}
\caption{Pseudocode for the ADI method described as an iterative algorithm for solving $AX-XB = F$. The convergence of $X_j$ to $X$ in the ADI method is particularly sensitive to the shifts $p_0,p_1,\ldots$ and $q_0,q_1,\ldots$. The convergence test at the end of each iteration can also be subtle~\cite[Sec.~2.2]{Sabino_07_01}. We do not use this general form of the ADI method as it does not lead to an algorithm with a provable computational complexity. Instead, we employ the ADI method on Sylvester matrix equations that satisfy P1--P3, where a different variant of the ADI method can be employed (see Algorithm~\ref{alg:ADIDirect}).}
\label{fig:ADI}
\end{figure}

For an integer $J$, we would like to know upper bounds on $\|X-X_J\|_2$ so that we can determine a priori how many ADI iterations are required to achieve a relative accuracy of $0<\epsilon<1$. To develop error bounds on $\|X-X_J\|_2$, we desire \eqref{eq:SylvesterEquation} to satisfy three properties.  Later, in section~\ref{sec:PoissonSquare}, we will design a spectral discretization of~\eqref{eq:PoissonSquare} as a Sylvester matrix equation with these three properties.

\subsection*{Property 1: Normal matrices} 
This simplifies the error analysis of the ADI method:
\vspace{1em}
\begin{enumerate}
\item[P1.] \emph{The matrices $A$ and $B$ are normal matrices.}
\end{enumerate}
\vspace{1em}
In particular, when P1 holds there is a bound on the error $\|X-X_J\|_2$ that only depends on the eigenvalues of $A$ and $B$ and the shifts $p_0,\ldots,p_{J-1}$ and $q_0,\ldots,q_{J-1}$~\cite{Benner_09_01}. Specifically,
\[
\|X-X_J\|_2 \leq \frac{\sup_{z\in\sigma(A)} \left|r(z)\right|}{\inf_{z\in\sigma(B)} \left|r(z)\right|}\|X\|_2, \qquad r(z) = \frac{\prod_{j=0}^{J-1}(z-p_j)}{\prod_{j=0}^{J-1} (z-q_j)},
\]
where $\sigma(A)$ and $\sigma(B)$ denote the spectra of $A$ and $B$, respectively.
To make the upper bound on $\|X-X_J\|_2$ as small as possible, one hopes to select shifts so that
\begin{equation}
\frac{\sup_{z\in\sigma(A)} \left|r(z)\right|}{\inf_{z\in\sigma(B)} \left|r(z)\right|} = \inf_{s\in\mathcal{R}_{J}} \frac{\sup_{z\in\sigma(A)} \left|s(z)\right|}{\inf_{z\in\sigma(B)} \left|s(z)\right|},
\label{eq:DiscreteZolotarev}
\end{equation}
where $\mathcal{R}_{J}$ denotes the space of degree $(J,J)$ rational functions. In general, it is challenging to calculate explicit shifts so that $r(z)$ attains the infimum in~\eqref{eq:DiscreteZolotarev}. However, this problem is (approximately) solved if the next property holds. 

\subsection*{Property 2: Real and disjoint spectra} 
The following property of~\eqref{eq:SylvesterEquation} allows us to derive explicit expressions for the ADI shifts:
\vspace{1em}
\begin{enumerate}
\item[P2.] \emph{There are real disjoint non-empty intervals $[a,b]$ and $[c,d]$ such that $\sigma(A) \subset [a,b]$ and $\sigma(B) \subset [c,d]$.}
\end{enumerate}
\vspace{1em}
If P1 and P2 both hold, then we can relax~\eqref{eq:DiscreteZolotarev} and select ADI shifts so that
 \begin{equation}
\|X-X_J\|_2 \leq  Z_J([a,b],[c,d])\|X\|_2, \quad Z_J([a,b],[c,d])=\inf_{s\in\mathcal{R}_{J}}\frac{\sup_{z\in[a,b]} \left|s(z)\right|}{\inf_{z\in[c,d]} \left|s(z)\right|},
\label{eq:ContinuousZolotarev}
\end{equation}
where $Z_J=Z_J([a,b],[c,d])$ is referred to as a Zolotarev number. Since Zolotarev numbers have been extensively studied in the literature~\cite{Beckermann_16_01,Lebedev_77_01,Lu_91_01,Zolotarev_1877_01}, we are able to derive explicit expressions for the ADI shifts so that~\eqref{eq:ContinuousZolotarev} holds. Moreover, we have an explicit upper bound on $Z_J$. 

\begin{theorem}
Let $J$ be a fixed integer and let $X$ satisfy $AX-XB=F$, where P1 and P2 hold. Run the ADI method with the shifts
\begin{equation}
p_j = T\!\left(\! -\alpha \dn \left[ \frac{2j+1}{2J}K(\kappa), \kappa\right] \right), \quad q_j = T\!\left(\! \alpha \dn \left[ \frac{2j+1}{2J}K(\kappa), \kappa \right] \right),
\label{eq:OptimalShifts}
\end{equation}
for $0\leq j\leq J-1$, where $\kappa = \sqrt{1-1/\alpha^2}$, $K(\kappa)$ is the complete elliptic integral of the first kind~\cite[(19.2.8)]{NISTHandbook}, and $\dn(z,\kappa)$ is the Jacobi elliptic function of the third kind~\cite[(22.2.6)]{NISTHandbook}.  Here, $\alpha$ is the real number given by $\alpha = -1+2\gamma + 2\sqrt{\gamma^2-\gamma}$ with $\gamma = |c-a||d-b|/(|c-b||d-a|)$ and $T$ is the M\"{o}bius transformation that maps $\{-\alpha,-1,1,\alpha\}$ to $\{a,b,c,d\}$.  Then, the ADI iterate $X_J$ satisfies
\begin{equation}
\|X-X_J\|_2\leq Z_J\|X\|_2,\qquad Z_J([a,b],[c,d]) \leq 4\left[\exp\left(\frac{\pi^2}{4\mu(1/\sqrt{\gamma})}\right)\right]^{-2J},
\label{eq:ZolotarevBound}
\end{equation}
where $\mu(\lambda) = \tfrac{\pi}{2}K(\sqrt{1-\lambda^2})/K(\lambda)$ is the Gr\"{o}tzsch ring function.
\label{thm:ADIconvergence}
\end{theorem}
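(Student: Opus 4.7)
My plan is to reduce the theorem to the classical Zolotarev rational approximation problem on two disjoint real intervals and then invoke Zolotarev's explicit solution in terms of Jacobi elliptic functions. The first step is to observe that the error identity from \cite{Benner_09_01} recalled just before P2 shows that for any choice of shifts $\{p_j,q_j\}_{j=0}^{J-1}$, the ADI iterate $X_J$ satisfies $\|X-X_J\|_2 \le \frac{\sup_{z\in\sigma(A)}|r(z)|}{\inf_{z\in\sigma(B)}|r(z)|}\|X\|_2$ with $r(z) = \prod_j(z-p_j)/\prod_j(z-q_j)$. Because P2 is in force, enlarging $\sigma(A)$ and $\sigma(B)$ to $[a,b]$ and $[c,d]$ only weakens the bound, so it suffices to build a rational $r\in\mathcal{R}_J$ whose supremum on $[a,b]$ is small relative to its infimum on $[c,d]$; the infimal ratio is by definition $Z_J([a,b],[c,d])$.

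Next I would use the M\"obius invariance of the two-interval extremal problem to normalize. Since $T$ is a M\"obius map and precomposition preserves $\mathcal{R}_J$ as well as the sup/inf ratio, $Z_J([a,b],[c,d]) = Z_J([-\alpha,-1],[1,\alpha])$. The parameter $\alpha>1$ is pinned down by matching the cross-ratio of the four endpoints: the cross-ratio of $(-\alpha,-1,1,\alpha)$ is $(1+\alpha)^2/(4\alpha)$, and setting this equal to $\gamma = |c-a||d-b|/(|c-b||d-a|)$ yields the quadratic $\alpha^2 - 2(2\gamma-1)\alpha + 1 = 0$, whose root greater than one is $\alpha = -1+2\gamma+2\sqrt{\gamma^2-\gamma}$ as claimed.

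Then I would invoke Zolotarev's classical solution of the third problem on the canonical symmetric pair $[-\alpha,-1]\cup[1,\alpha]$. The extremal degree-$(J,J)$ rational function is parameterized by the Jacobi elliptic function $\dn(\cdot,\kappa)$ with modulus $\kappa = \sqrt{1-1/\alpha^2}$: its zeros lie at $-\alpha\,\dn[(2j+1)K(\kappa)/(2J),\kappa]$ and its poles at the reflected points $+\alpha\,\dn[(2j+1)K(\kappa)/(2J),\kappa]$, for $0\le j\le J-1$. Pulling these back through $T$ produces the shifts $p_j$ and $q_j$ displayed in~\eqref{eq:OptimalShifts}, and substituting the corresponding $r$ into the first-step bound yields the advertised estimate $\|X-X_J\|_2 \le Z_J\|X\|_2$.

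The closed-form upper bound $Z_J \le 4[\exp(\pi^2/(4\mu(1/\sqrt{\gamma})))]^{-2J}$ then follows from evaluating Zolotarev's extremal value in terms of the nome of the underlying elliptic modulus and identifying $\mu(1/\sqrt{\gamma})$ as the conformal modulus of the doubly connected region complementary to $[-\alpha,-1]\cup[1,\alpha]$, using that $1/\sqrt{\gamma}$ is the unique M\"obius-invariant of the four endpoints. The main obstacle is precisely this last identification together with verifying that the displayed $\dn$-values really are the zeros and poles of Zolotarev's extremal rational function. Both steps are classical but technical, relying on the equioscillation characterization of the extremum combined with a Cayley-type uniformization of the two-interval complement by elliptic functions. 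Since this material is fully worked out in the references \cite{Beckermann_16_01,Lebedev_77_01,Lu_91_01} cited just after~\eqref{eq:ContinuousZolotarev}, I would appeal to them rather than reproduce the derivation.
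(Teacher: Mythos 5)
Your proposal is correct and follows essentially the same route as the paper: reduce to the symmetric pair $[-\alpha,-1]\cup[1,\alpha]$ via a M\"obius map determined by cross-ratio matching, invoke the classical Zolotarev/Lu solution for the shifts on that symmetric configuration, and pull back through $T$. You spell out a few steps the paper leaves implicit (the cross-ratio computation giving the quadratic for $\alpha$, and the equioscillation characterization of Zolotarev's extremal rational function), but the decomposition and the key lemma (M\"obius invariance of $Z_J$ plus the known symmetric-interval solution) are identical to the paper's.
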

\begin{proof}
If $c=-b$ and $d=-a$, then the ADI shifts to ensure that $\|X-X_J\|_2\leq Z_J([-b,-a],[a,b])\|X\|_2$ are given in~\cite[(2.18)]{Lu_91_01} as
\begin{equation}
p_j = -b \dn \left[ \frac{2j+1}{2J}K(\sqrt{1-a^2/b^2}), \sqrt{1-a^2/b^2}\right], \quad q_j = -p_j, \quad 0\leq j\leq J-1.
\label{eq:Nope}
\end{equation}
For the $\alpha$ given in the statement of the theorem, there exists a M\"{o}bius transformation $T$ that maps $\{-\alpha,-1,1,\alpha\}$ to $\{a,b,c,d\}$ because the two sets of collinear points have the same absolute cross-ratio. Since any
M\"{o}bius transformation maps rational functions to rational functions, $Z_J([-\alpha,-1],[1,\alpha])=Z_J([a,b],[c,d])$ with the zeros and poles of the associated rational functions (see~\eqref{eq:ContinuousZolotarev}) related
by the M\"{o}bius transformation $T$. The formula~\eqref{eq:OptimalShifts} is immediately derived as $T(p_j)$ and $T(q_j)$, where $p_j$ and $q_j$ in~\eqref{eq:Nope} are taken with $a=1$ and $b = \alpha$.
\end{proof}

We often prefer to simplify the bound in~\eqref{eq:ZolotarevBound} by removing the Gr\"{o}tzsch ring function from the bound on $Z_J$. For example, the bound in~\eqref{eq:ZolotarevBound} remains valid, but is slightly weakened, if $4\mu(1/\sqrt{\gamma})$ is replaced by the upper bound $2\log(16\gamma)$~\cite{Beckermann_16_01}, i.e., 
\begin{equation}
\|X-X_J\|_2\leq 4\left[\exp\left(\frac{\pi^2}{2\log(16\gamma)}\right)\right]^{-2J}\|X\|_2, \qquad \gamma = \frac{|c-a||d-b|}{|c-b||d-a|}.
\label{eq:ZolotarevBound_clean1}
\end{equation}
Moreover, if $c=-b$ and $d=-a$ (which commonly occurs when $B=-A^T$), then the bound simplifies even more as $4\mu(1/\sqrt{\gamma}) = 2\mu(a/b)$ and the bound remains valid if $2\mu(a/b)$ is replaced by $\log(4b/a)$. That is,
\begin{equation}
\|X-X_J\|_2\leq 4\left[\exp\left(\frac{\pi^2}{\log(4b/a)}\right)\right]^{-2J}\|X\|_2.
\label{eq:ZolotarevBound_clean2}
\end{equation}

Theorem~\ref{thm:ADIconvergence} is very fruitful as it allows us to use the ADI method more like a direct method to solve $AX-XB=F$ when P1 and P2 hold.  For a relative accuracy of $0<\epsilon<1$, the simplified bound in~\eqref{eq:ZolotarevBound_clean1} shows that $\|X-X_J\|_2 \leq \epsilon \|X\|_2$ if we take
\begin{equation}
J = \bigg\lceil \frac{\log(16\gamma)\log(4/\epsilon)}{\pi^2} \bigg\rceil
\label{eq:ADIIterations}
\end{equation}
and we run the ADI method with the shifts given in~\eqref{eq:OptimalShifts}. Algorithm~\ref{alg:ADIDirect} summarizes the ADI method on $AX-XB=F$ when P1 and P2 hold.  This is the variant of the ADI method that we employ throughout this paper.

\begin{figure}
\begin{algorithm}[H]
\caption{The ADI method to solve $AX-XB = F$ when P1 and P2 hold}
\begin{algorithmic}[1]
\Require{$A, B, F\in\mathbb{C}^{n\times n}$, $a,b,c,d\in\mathbb{R}$ satisfying P2, and a tolerance $0<\epsilon<1$}
\Ensure{$X_J\in\mathbb{C}^{n\times n}$ such that $\|X-X_J\|_2\leq \epsilon \|X\|_2$}
\State{$\gamma := |c-a||d-b|/(|c-b||d-a|)$}
\State{$J := \lceil \log(16\gamma) \log(4/\epsilon)/\pi^2\rceil$}
\State{Set $p_j$ and $q_j$ for $0\leq j\leq J-1$ as given in~\eqref{eq:OptimalShifts}}
\State{$X_0$ := 0}
\For{$j = 0, \ldots, J-1$}
\State{Solve $X_{j+1/2}(B-p_jI) = F - (A-p_jI)X_j$ for $X_{j+1/2}$}
\State{Solve $(A-q_jI)X_{j+1} = F - X_{j+1/2}(B-q_jI)$ for $X_{j+1}$}
\EndFor
\State{\Return $X_J$}
\end{algorithmic}
\label{alg:ADIDirect}
\end{algorithm}
\vspace{-1.8em}
\caption{Pseudocode for the ADI method for solving $AX-XB=F$ when P1 and P2 hold. Here, for any relative accuracy $0<\epsilon<1$ the number of ADI iterations, $J$, and shifts $p_0,\ldots,p_{J-1}$ and $q_0,\ldots,q_{J-1}$ are known such that $\|X-X_J\|_2\leq \epsilon\|X\|_2$.}
\label{fig:ADIDirect}
\end{figure}

We appreciate that it is awkward to calculate the shifts in~\eqref{eq:OptimalShifts} because they involve complete elliptic integrals and Jacobi elliptic functions. For the reader's convenience, we provide MATLAB code to compute the shifts in Appendix \ref{app:ADIshifts}. Note that computing the shifts can be done in $\mathcal{O}(1)$ operations, independent of $n$.

\subsection*{Property 3: Fast shifted linear solves} 
There is still one more important property of $AX-XB=F$. The shifted linear solves in Algorithm~\ref{alg:ADIDirect} need to be computationally cheap:
\vspace{1em}
\begin{enumerate}
\item[P3.] \emph{For any $p,q\in\mathbb{C}$, the linear systems $(A-pI)x=b$ and $(B-qI)x=b$ can be solved in $\mathcal{O}(n)$ operations.}
\end{enumerate}
\vspace{1em}
If P3 holds, then each ADI iteration costs only $\mathcal{O}(n^2)$ operations and the overall cost of the ADI method with $J$ iterations is $\mathcal{O}(Jn^2)$ operations.    

In summary, properties P1, P2, and P3 are sufficient conditions on $AX-XB=F$ so that (i) we can determine the number of ADI iterations to attain a relative accuracy of $0<\epsilon<1$, (ii) we can derive explicit expressions for the ADI shifts, and (iii) we can compute each ADI iteration in $\mathcal{O}(n^2)$ operations.

\subsection{An ADI-based fast Poisson solver for finite difference methods}\label{sec:FDfastPoisson}
We now describe the ADI-based fast Poisson solver with the second-order five-point FD stencil, though the approach easily extends to fourth- and sixth-order FD methods. Recall that the FD discretization of~\eqref{eq:PoissonSquare} with a five-point stencil on an $(n+1)\times (n+1)$ equispaced grid is given by the Sylvester matrix equation $KX+XK^T =F$ (see~\eqref{eq:FDequation}). We now verify that P1, P2, and P3 hold for $KX+XK^T = F$:
\vspace{1em}
\begin{enumerate}[itemsep=0.5em]
\item[P1:] $A = K$ and $B=-K^T$ are real and symmetric, so they are normal matrices.
\item[P2:] The eigenvalues of $K$ are given by $-4/h^2\sin^2(\pi k/(2n))$ for $1\leq k\leq n-1$ with $h = 2/n$~\cite[(2.23)]{LeVeque_07_01}. Since $(2/\pi)x\leq \sin x \leq 1$ for $x\in[0,\pi/2]$ and $h = 2/n$, the eigenvalues of $A=K$ are contained in the interval $[-n^2,-1]$. The eigenvalues of $B=-K^T$ are contained in $[1,n^2]$.
\item[P3:] For any $p,q\in\mathbb{C}$, the linear systems $(A-pI)x=b$ and $(B-qI)x=b$ are tridiagonal and hence can be solved via the Thomas algorithm in $\mathcal{O}(n)$ operations~\cite[p.~162]{Datta_10_01}.
\end{enumerate}
\vspace{1em}

From the simplified bound in~\eqref{eq:ZolotarevBound_clean2}, we conclude that $J = \lceil \log(2n)\log(4/\epsilon)/\pi^2\rceil$ ADI iterations are sufficient to ensure that $\|X-X_J\|_2\leq \epsilon \|X\|_2$ for $0<\epsilon<1$, where the shifts are given in Theorem~\ref{thm:ADIconvergence}. Moreover, since P3 holds each ADI iteration only costs $\mathcal{O}(n^2)$ iterations.  We conclude that the ADI method in Algorithm~\ref{alg:ADIDirect} solves $KX+XK^T=F$ in a total of $\mathcal{O}(n^2\log n\log(1/\epsilon))$ operations. Figure~\ref{fig:FDComparison} demonstrates the execution time\footnote{All timings in the paper were performed in MATLAB R2017a on a 2017 Macbook Pro with no explicit parallelization.} of this approach in comparison to the FFT-based fast Poisson solver for $10\leq n\leq 5000$.  While we are not advocating the use of the ADI-based fast Poisson solver for the five-point FD stencil, it does provide flexibility through the choice of an error tolerance $\epsilon$ and may be useful for higher-order FD methods and non-uniform grids. As we will show in the next section, ADI-based solvers extend to carefully designed spectrally accurate discretizations (see section~\ref{sec:PoissonSquare}).

\begin{figure}
\centering
 \begin{overpic}[width=0.49\textwidth]{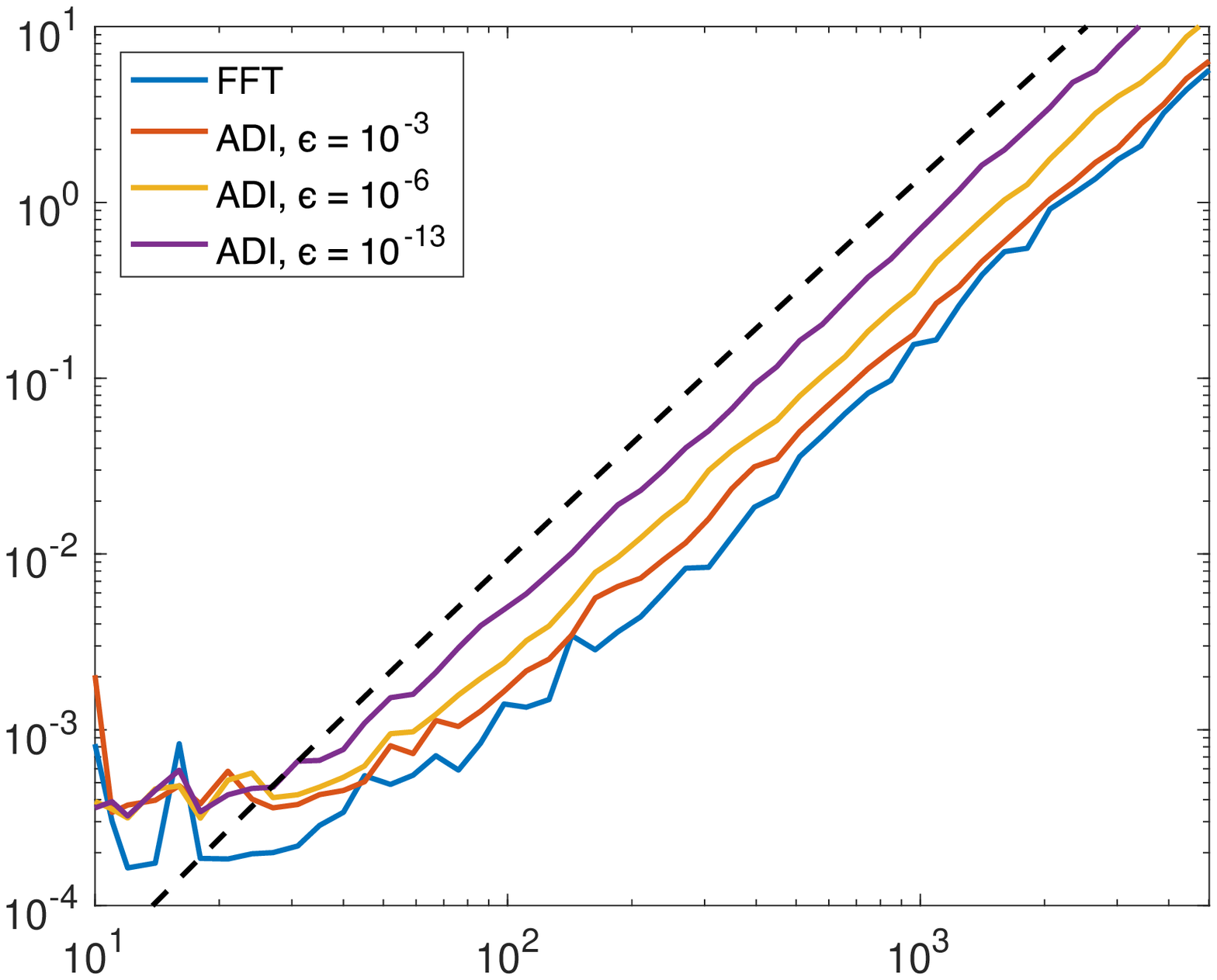}
 \put(-6,23) {\rotatebox{90}{Execution time (s)}}
 \put(50,-4) {$n$}
 \put(55,52) {\rotatebox{44}{$\mathcal{O}(n^2\log n)$}}
 \end{overpic}
\vspace{0.8em}
\caption{Execution times for the ADI- and FFT-based fast Poisson solvers for a 5-point FD discretization with $10\leq n\leq 5000$. The ADI-based solver is comparable to the FFT-based solver when $\epsilon = 10^{-3}$. While the ADI-based fast Poisson solver is computationally more expensive, it is applicable to a carefully designed spectral discretization. Since FFT-based fast Poisson solver necessarily require uniform grids, they cannot provide a practical optimal complexity spectral method~\cite{Platte_11_01}.}
\label{fig:FDComparison}
\end{figure}

We expect that one can also derive ADI-based fast Poisson solvers for any $(4w+1)$-point FD stencil, $1\leq w\leq \lfloor (n-1)/2\rfloor$, that run in an optimal number of $\mathcal{O}(n^2\log n\log(1/\epsilon))$ operations. Because FD discretization matrices have Toeplitz structure, one shifted linear solve only costs $\mathcal{O}(n\log n)$ operations using FFTs~\cite{Martinsson_05_01}.  Unfortunately, for $w=\lfloor (n-1)/2\rfloor$ the resulting spectrally accurate method must be numerically unstable because it is based on equispaced nodes~\cite{Platte_11_01}.

%
%

\section{A fast spectral Poisson solver on the square}\label{sec:PoissonSquare}
Consider Poisson's equa\-tion on the square with zero homogeneous Dirichlet conditions:
\begin{equation}
u_{xx} + u_{yy} = f, \quad (x,y)\in[-1,1]^2, \qquad u(\pm1,\cdot) = u(\cdot,\pm 1) = 0.
\label{eq:PoissonDirichlet}
\end{equation}
Since~\eqref{eq:PoissonDirichlet} has homogeneous Dirichlet conditions, we know that the solution can be written as $u(x,y) = (1-x^2)(1-y^2)v(x,y)$ for some function $v(x,y)$. To ensure that we are deriving a stable spectral method, we expand $v(x,y)$ in a standard orthogonal polynomial basis\footnote{Additional benefits of choosing standard orthogonal polynomials include fast evaluation using Clenshaw's algorithm and fast transforms.}~\cite{Trefethen_00_01}. That is,
\begin{equation}
u(x,y) \approx \sum_{i=0}^{n-1} \sum_{j=0}^{n-1} X_{ij} (1-y^2)(1-x^2)\phi_i(y)\phi_j(x), \qquad (x,y)\in[-1,1]^2,
\label{eq:UltrasphericalExpansion} 
\end{equation} 
where $\phi_0,\phi_1,\ldots,$ are a sequence of orthogonal polynomials on $[-1,1]$ and the degree of $\phi_j$ is exactly $j$ for $j\geq 0$. Here, $X\in\mathbb{C}^{n\times n}$ is the matrix of expansion coefficients of the solution and we wish to find $X$ so that the first $n\times n$ coefficients of $u_{xx}+u_{yy}$ match those of $f$. 
The choice of the orthogonal polynomial basis is critically important to derive our optimal complexity ADI-based fast Poisson solver. In particular, we want to construct a Sylvester matrix equation 
for which P1, P2, and P3 hold. If, for example, the Chebyshev basis is selected, then the resulting Sylvester matrix equation does not satisfy P1 from section~\ref{sec:ADIMethod}.

\subsection{An ultraspherical polynomial basis}\label{sec:CleverBasis}
To simplify the discretization of $u_{xx}$ in~\eqref{eq:PoissonDirichlet}, we select $\phi_j$ so that $\smash{\frac{d^2}{dx^2}\!\!\left[(1-x^2)\phi_j(x)\right]}$ has a simple form in terms of $\phi_j(x)$. By the chain rule, we have
\begin{equation}
\frac{d^2}{d x^2}\!\!\left[(1-x^2)\phi_j(x)\right] = (1-x^2)\phi_j^{''}(x) - 4x\phi_j^{'}(x) - 2\phi_j(x),
\label{eq:SecondDerivativeUltraS}
\end{equation}
where a prime indicates one derivative in $x$. In~\cite[Chap.~18]{NISTHandbook}, one finds that the normalized ultraspherical polynomial,\footnote{The ultraspherical polynomial of degree $j$ and parameter $\lambda>0$ is denoted by $\smash{C_j^{(\lambda)}}$, where $\smash{C_0^{(\lambda)}},\smash{C_1^{(\lambda)}},\ldots$ are orthogonal on $[-1,1]$ with respect to the weight function $(1-x^2)^{\lambda-1/2}$. The normalized ultraspherical polynomials of parameter $3/2$, denoted by $\smash{\tilde{C}_j^{(3/2)}}$, satisfy 
\[
\tilde{C}_j^{(3/2)}(x) = \sqrt{\frac{j+3/2}{(j+1)(j+2)}}C_j^{(3/2)}(x), \qquad j\geq 0,
\] 
so that $\smash{\int_{-1}^1 (\tilde{C}_j^{(3/2)}(x))^2 (1-x^2) dx = 1}$.} denoted by $\tilde{C}_j^{(3/2)}(x)$, of degree $j$ and parameter $3/2$ satisfies the second-order differential equation~\cite[Table~18.8.1]{NISTHandbook}
\begin{equation}
(1-x^2){\tilde{C}^{(3/2)}_j}{}^{''}(x) - 4x{\tilde{C}^{(3/2)}_j}{}^{'}(x) + j(j+3)\tilde{C}^{(3/2)}_j(x) = 0, \qquad x\in[-1,1].
\label{eq:SecondOrderUltraS}
\end{equation}
In particular, this means that $\smash{\tilde{C}^{(3/2)}_j(x)}$ is a eigenfunction of the differential operator $\smash{u\mapsto\frac{d^2}{d x^2}\!\!\left[(1-x^2)u\right]}$, i.e., 
\[
\frac{d^2}{d x^2}\!\!\left[(1-x^2)\tilde{C}^{(3/2)}_j(x)\right] = -(j(j+3)+2)\tilde{C}^{(3/2)}_j(x), \qquad j\geq 0.
\]
Encouraged by this simplification, we select $\phi_j = \tilde{C}^{(3/2)}_j$ in~\eqref{eq:UltrasphericalExpansion}.  

\subsection{A spectral discretization of Poisson's equation}
To construct a discretization of~\eqref{eq:PoissonDirichlet}, we apply the Laplacian to the expansion in~\eqref{eq:UltrasphericalExpansion} to derive a set of equations that the 
matrix $X$ must satisfy.  The action of the Laplacian on each element of our basis is given by 
\begin{equation}
\begin{aligned}
&\nabla^2\left[(1-y^2)(1-x^2)\tilde{C}^{(3/2)}_i(y)\tilde{C}^{(3/2)}_j(x)\right] \\
&= -\Bigl[(j(j+3)+2)(1-y^2)+(i(i+3)+2)(1-x^2)\Bigr]\tilde{C}^{(3/2)}_i(y)\tilde{C}^{(3/2)}_j(x).
\end{aligned}
\label{eq:LaplacianIdentity}
\end{equation}
Therefore, we can discretize~\eqref{eq:PoissonDirichlet} as a generalized Sylvester matrix equation
\begin{equation}
M X D^T + D X M^T = F,
\label{eq:GeneralizedSylvester}
\end{equation}
where $X$ is the matrix of $(1-y^2)(1-x^2)\tilde{C}^{(3/2)}(y)\tilde{C}^{(3/2)}(x)$ expansion coefficients for the solution $u(x,y)$ in~\eqref{eq:UltrasphericalExpansion}, $F$ is the matrix of bivariate $\tilde{C}^{(3/2)}$ expansion coefficients for $f$ (see section~\ref{sec:UltrasphericalCoefficients}), $D$ is a diagonal matrix with $D_{jj} = -(j(j+3)+2)$, and $M$ is the $n\times n$ matrix that represents multiplication by $1-x^2$ in the $\tilde{C}^{(3/2)}$ basis. Since the recurrence relation for the unnormalized ultraspherical polynomials, $C^{(3/2)}$, is given by~\cite[(18.9.7) \& (18.9.8)]{NISTHandbook}
\[
\begin{aligned}
&(1-x^2)C^{(3/2)}_j\!(x) = -\frac{(j+1)(j+2)}{(2j+1)(2j+3)(2j+5)}\!\bigg[(2j+1)C^{(3/2)}_{j+2}\!(x) - 2(2j+3)C^{(3/2)}_{j}\!(x)
\\[-0.4em]
&\qquad\qquad\qquad\qquad\qquad\qquad\qquad\qquad\qquad\qquad\;\;+ (2j+5)C^{(3/2)}_{j-2}\!(x)\bigg],
\end{aligned}
\]
we find---after algebraic manipulations---that $M$ is a symmetric pentadiagonal matrix with
\begin{equation} 
M_{j,j} = \frac{2(j+1)(j+2)}{(2j+1)(2j+5)},  \;\; M_{j,j+1} = 0, \;\; M_{j,j+2} = \frac{-1}{(2j+3)(2j+5)}\sqrt{\frac{(j+4)!(2j+3)}{j!(2j+7)}}.
\label{eq:UltraSmultiplication} 
\end{equation} 
We can rearrange~\eqref{eq:GeneralizedSylvester} by applying $D^{-1}$ to obtain the standard Sylvester matrix equation
 \begin{equation}
AX - XB = D^{-1} F D^{-1}, \qquad A = D^{-1}M, \quad B = -M^TD^{-1}.
\label{eq:Sylvester}
\end{equation}

\subsection{Verifying that P1, P2, and P3 hold}
To guarantee that the ADI method for solving~\eqref{eq:Sylvester} has optimal complexity, we want the Sylvester matrix equation to satisfy P1, P2, and P3 (see section~\ref{sec:ADIMethod}). 
Unfortunately, the matrices $A$ and $B$ in~\eqref{eq:Sylvester} are not normal matrices, so we do not solve~\eqref{eq:Sylvester} using the ADI method directly.  Instead, we note that $A$ and $B=-A^T$ are pentadiagonal matrices with zeros on the sub- and super-diagonals so that there exists a diagonal matrix $D_s$ for which $\tilde{A} = D_s^{-1}AD_s$ and $\tilde{B} = -\tilde{A}^T = - \tilde{A}$ are real symmetric pentadiagonal matrices. Therefore, to solve~\eqref{eq:Sylvester} we solve the following Sylvester matrix equation: 
\begin{equation}
\tilde{A} Y - Y\tilde{B} = D_s^{-1}(D^{-1} F D^{-1}) D_s^{-1}, \qquad Y = D_s^{-1} X D_s,
\label{eq:ADIfriendlySylvester}
\end{equation} 
and recover $X$ via $X = D_sY D_s^{-1}$.  We now verify that P1, P2, and P3 hold for~\eqref{eq:ADIfriendlySylvester}: 
\vspace{1em}
\begin{enumerate}[itemsep=0.5em]
\item[P1:] $\tilde{A}$ and $\tilde{B}$ are real and symmetric so are normal matrices,
\item[P2:] The eigenvalues of $\tilde{A}$ are contained in the interval $[-1,-1/(30n^4)]$ (see Appendix~\ref{app:Gershgorin}).  The eigenvalues of $\tilde{B} = -\tilde{A}^T$ are contained in $[1/(30n^4),1]$.
\item[P3:] For any $p,q\in\mathbb{C}$, the linear systems $(\tilde{A}-pI)x=b$ and $(\tilde{B}-qI)x=b$ are pentadiagonal matrices with zero sub- and super-diagonals. Hence, they can be solved in $\mathcal{O}(n)$ operations using the Thomas algorithm~\cite[p.~162]{Datta_10_01}.
\end{enumerate}
\vspace{1em}
By Theorem~\ref{thm:ADIconvergence}, we need at most 
\[
J = \lceil \log(120n^4) \log(1/\epsilon))/(2\pi^2) \rceil.
\]
ADI iterations to ensure that we solve~\eqref{eq:ADIfriendlySylvester} to within a relative accuracy of $0<\epsilon<1$. 
Since P3 holds, the ADI method solves~\eqref{eq:ADIfriendlySylvester} in $\mathcal{O}(n^2\log n \log(1/\epsilon))$ operations, and an additional $\mathcal{O}(n^2)$ operations recovers $X$ from $Y$.


\begin{figure}
	\centering
	\begin{minipage}{.49\textwidth}
	\includegraphics[width=\textwidth,trim={1cm 0.5cm 1cm 0},clip]{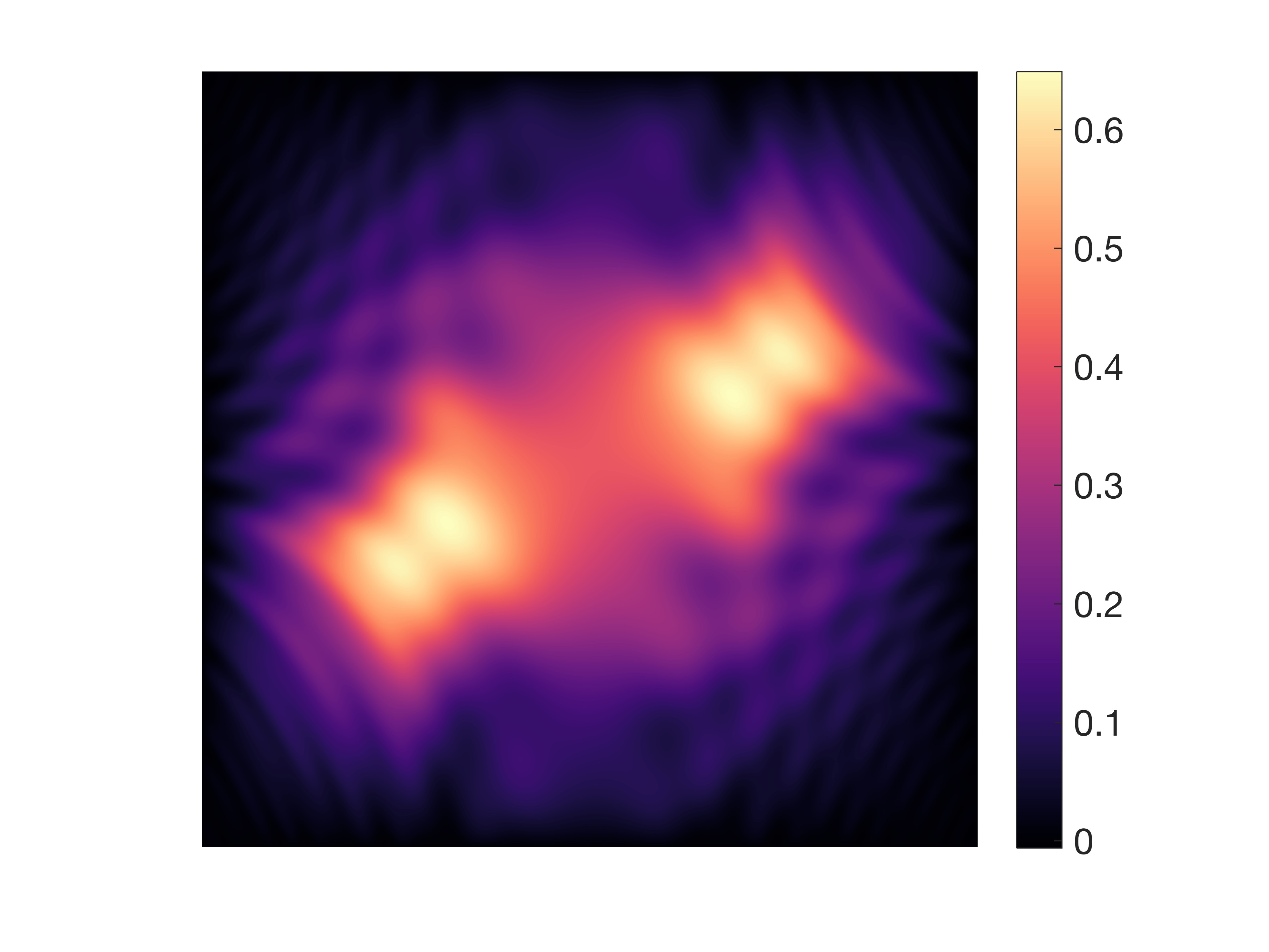}
	\end{minipage}~~
	\begin{minipage}{.49\textwidth}
    \begin{overpic}[width=.95\textwidth,trim={1.3cm 0.6cm 1.5cm 1.1cm},clip]{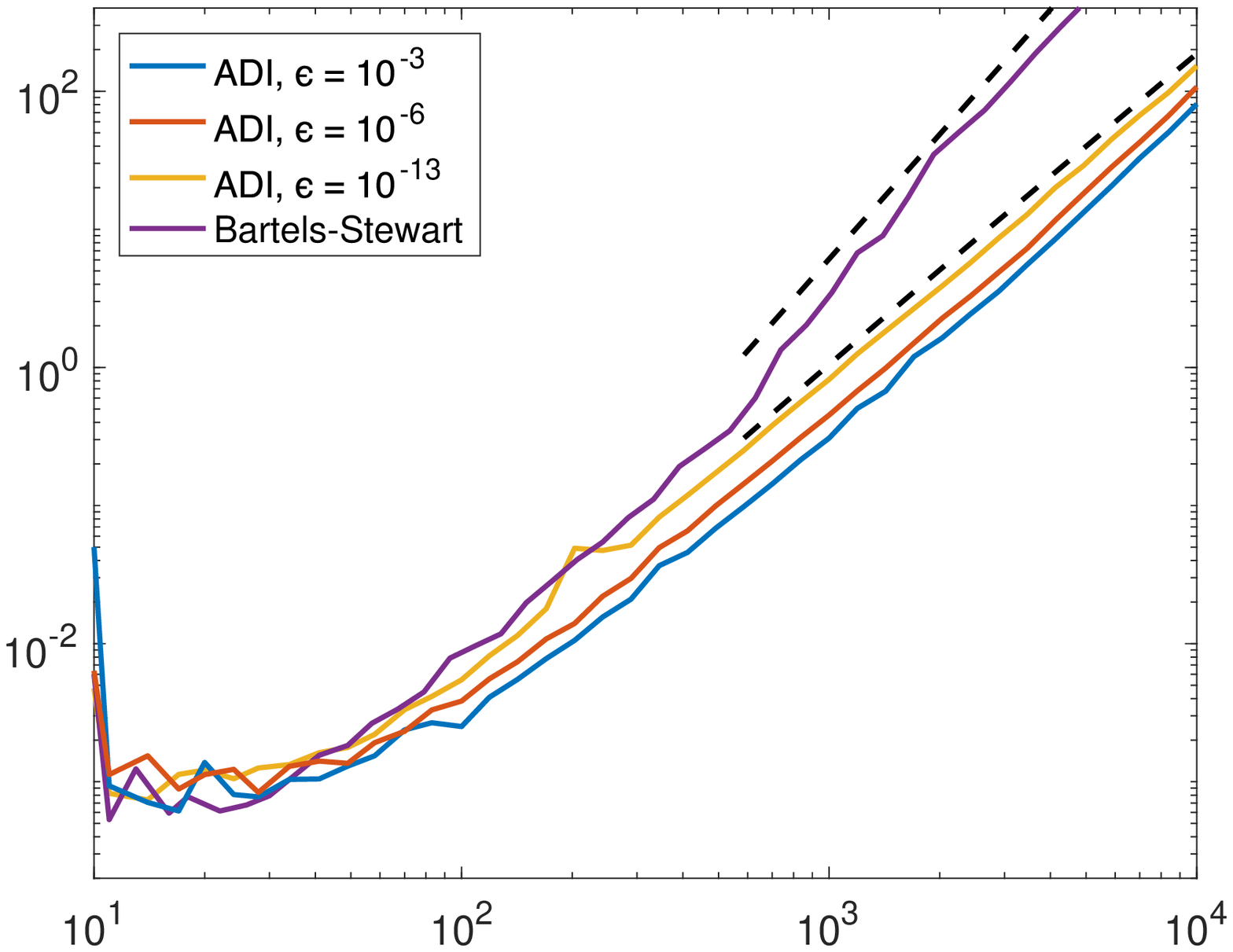}
		\put(-7,17) {\rotatebox{90}{Execution time (s)}}
		\put(50,-5) {$n$}
		\put(68,53) {\rotatebox{40}{$\mathcal{O}(n^2 (\log n)^2)$}}
		\put(66,61) {\rotatebox{51}{$\mathcal{O}(n^3)$}}
    \end{overpic}
    \end{minipage} 
	\caption{Left: A computed solution to Poisson's equation on the square with right-hand side $f(x,y) = -100 x \sin(20\pi x^2 y) \cos(4\pi(x+y))$ and $n=200$, using an error tolerance of $\epsilon = 10^{-13}$. Right: Execution times for solving $u_{xx} + u_{yy} = f$ on $[-1,1]^2$ with zero homogeneous Dirichlet boundary conditions, using both our ADI-based solver with various error tolerances and the Bartels--Stewart algorithm~\cite{Bartels_72_01}.}
	\label{fig:ADIcomparison}
\end{figure}

\subsection{Computing the ultraspherical coefficients of a function}\label{sec:UltrasphericalCoefficients} 
So far our Poisson solver assumes that (a) one is given the $\tilde{C}^{(3/2)}$ expansion coefficients for $f$ in~\eqref{eq:PoissonDirichlet} and (b) one is satisfied with the solution returned in 
the form~\eqref{eq:UltrasphericalExpansion}.

It is known how to compute the Legendre expansion coefficients $F_{\text{leg}}$ from $f$ in $\mathcal{O}(n^2 (\log n)^2 \log(1/\epsilon))$ operations~\cite{Townsend_16_01}.\footnote{The Chebfun code to compute the $n\times n$ Legendre coefficients of $f$ is {\tt g = chebfun2(@(x,y) f(x,y)); Fleg = cheb2leg(cheb2leg(chebcoeffs2(g,n,n)).').';}~\cite{Chebfun}.} Using the fact that~\cite[(18.7.9) \& (18.9.7)]{NISTHandbook}
\[
(j+\tfrac{1}{2})P_j(x) = \sqrt{\frac{(j+1)(j+2)}{(j+3/2)}}\tilde{C}^{(3/2)}_j(x) - \sqrt{\frac{j(j-1)}{(j-1/2)}}\tilde{C}^{(3/2)}_{j-2}(x), \qquad j\geq 2, 
\]
there is a sparse upper-triangular matrix $S$ that converts Legendre coefficients to $\tilde{C}^{(3/2)}$ coefficients. Moreover, we can compute $F = S^{-1}F_{\text{leg}}S^{-T}$ in $\mathcal{O}(n^2)$ operations by backwards substitution.  

Once the expansion coefficients $X$ in~\eqref{eq:UltrasphericalExpansion} are known, one can convert the expansion coefficients to a Legendre or Chebyshev basis. The normalized ultraspherical coefficients are given by $X_{\text{ultra}} = MXM^T$ because of the $(1-y^2)(1-x^2)$ factor in~\eqref{eq:UltrasphericalExpansion}. To obtain the Legendre coefficients for $u$, we note that $X_{\text{leg}} = SX_{\text{ultra}}S^T$. One can now construct a bivariate Chebyshev expansion of $u$.\footnote{The Chebfun code to construct a bivariate Chebyshev expansion from a matrix of Legendre coefficients is {\tt u = chebfun2( leg2cheb(leg2cheb(Xleg).').', 'coeffs' )}~\cite{Chebfun}.}

Table~\ref{tab:AlgorithmSummary} summarizes our spectrally accurate and optimal complexity Poisson solver. The overall complexity is $\mathcal{O}(n^2 (\log n)^2 \log(1/\epsilon))$, after the coefficient transforms are taken into account.

Figure \ref{fig:ADIcomparison} shows our method compared to the Bartels--Stewart algorithm~\cite{Bartels_72_01} (invoked via the \texttt{lyap} command in MATLAB) used to solve the Sylvester equation \eqref{eq:Sylvester}. The Bartels--Stewart algorithm costs $\mathcal{O}(n^3)$ operations; as the timings demonstrate, our method is significantly faster once $n$ is larger than a few hundred. In addition, there are important advantages of ADI in our setting: we are able to relax the tolerance $\epsilon$ according to the application, allowing the algorithm to exploit that parameter for a reduced  computational cost. The solver can also easily be extended to any rectangular domain $[a,b]\times[c,d]$. Our Poisson solver on the rectangle can be accessed in~\cite{GithubRepo} via the command \texttt{poisson\_rectangle(F, lbc, rbc, dbc, ubc, [a b c d], tol)}, where \texttt{F} is the matrix of bivariate Chebyshev coefficients for the right-hand side, \texttt{lbc}, \texttt{rbc}, \texttt{dbc}, and \texttt{ubc} denote the left, right, bottom and top Dirichlet data, respectively, and \texttt{tol} is the error tolerance.

\begin{table} 
\centering
\caption{Summary of our optimal complexity, spectrally accurate Poisson solver on the square with an $n\times n$ discretization. The algorithm costs $\mathcal{O}(n^2(\log n)^2\log(1/\epsilon))$ operations for a working tolerance of $0<\epsilon<1$. For $n\leq 5000$, the dominating computational cost in practice is the ADI method.}
\begin{tabular}{ll} 
\hline
\rule{0pt}{3ex}Algorithmic step & Cost \\[2pt] 
\hline 
\rule{0pt}{3ex}1.~Compute the $\tilde{C}^{(3/2)}$ coefficients of $f$ in~\eqref{eq:PoissonDirichlet} using~\cite{Townsend_16_01} & $\mathcal{O}(n^2 (\log n)^2\log(1/\epsilon))$ \\[5pt] 
2.~Solve~\eqref{eq:ADIfriendlySylvester} via the ADI method  & $\mathcal{O}(n^2 \log n \log(1/\epsilon))$ \\[5pt] 
3.~Compute the solution to~\eqref{eq:Sylvester} as $X = D_sY D_s^{-1}$ & $\mathcal{O}(n^2)$ \\[5pt] 
4.~Compute the Chebyshev coefficients of $u$ using~\cite{Townsend_16_01} & $\mathcal{O}(n^2 (\log n)^2 \log(1/\epsilon))$\\[5pt] 
\hline
\end{tabular} 
\label{tab:AlgorithmSummary} 
\end{table} 

\section{Fast spectral Poisson solvers on cylindrical and spherical geometries}\label{sec:PoissonPolar}
We now describe how to extend our fast Poisson solver to cylindrical and spherical geometries. We exploit the fact that both the cylindrical and spherical Laplacians decouple in the azimuthal variable, allowing us to reduce the full three-dimensional problem into $n$ independent two-dimensional problems that can be solved by ADI. On both geometries, we employ a variant of the double Fourier sphere method \cite{Merilees_73_01} (see section~\ref{sec:DFScylinder}) and impose partial regularity on the solution to ensure smoothness.

\subsection{A fast spectral Poisson solver on the cylinder}\label{sec:PoissonCylinder}
Here, we consider solving Poisson's equation on the cylinder, i.e., $u_{xx} + u_{yy} + u_{zz} = f$ on $x^2+y^2 \in[0,1]$ and $z\in[-1,1]$ with homogeneous Dirichlet conditions.  Our first step is to change to the cylindrical coordinate system, i.e., $(x,y,z) = (r\cos\theta, r\sin\theta, z)$ where $r\in[0,1]$ is the radial variable and $\theta\in[-\pi,\pi]$ is the angular variable. This change-of-variables transforms Poisson's equation to
\begin{equation}
\frac{\partial^2 u}{\partial r^2} + \frac{1}{r}\frac{\partial u}{\partial r} + \frac{1}{r^2}\frac{\partial^2 u}{\partial \theta^2} + \frac{\partial^2 u}{\partial z^2} = f, \qquad (r,\theta,z)\in [0,1]\times [-\pi,\pi]\times [-1,1],
\label{eq:PoissonCylinder}
\end{equation}
where $u(1,\theta,z) = 0$ for $(\theta,z)\in[-\pi,\pi]\times [-1,1]$ and $u(r,\theta,\pm 1) = 0$ for $(r,\theta)\in[0,1]\times [-\pi,\pi]$.

The coordinate transform has simplified the domain of the differential equation to a rectangle, but has several issues: (1) Any point of the form $(0,\theta,z)$ with $\theta\in[-\pi,\pi]$ and $z\in[-1,1]$ maps to $(0,0,z)$ in Cartesian coordinates, introducing an artificial singularity along the center line $r=0$, (2) The differential equation in~\eqref{eq:PoissonCylinder} is second-order in the $r$-variable, but we do not have a natural boundary condition to impose at $r = 0$, and (3) Not every function in the variables $(r,\theta,z)$ is a well-defined function on the cylinder, so additional constraints must be satisfied by $u = u(r,\theta,z)$ in~\eqref{eq:PoissonCylinder}.

\subsubsection{The double Fourier sphere method for the cylinder}\label{sec:DFScylinder}
The double Fourier sphere (DFS) method, originally proposed for computations on the surface of the sphere~\cite{Merilees_73_01,Townsend_16_02}, is a simple technique that alleviates many of the concerns with cylindrical coordinate transforms. Instead of solving~\eqref{eq:PoissonCylinder}, we ``double-up" $u$ and $f$ to $\tilde{u}$ and $\tilde{f}$ and solve
\begin{equation}
\frac{\partial^2 \tilde{u}}{\partial r^2} + \frac{1}{r}\frac{\partial  \tilde{u}}{\partial r} + \frac{1}{r^2}\frac{\partial^2 \tilde{u}}{\partial \theta^2} + \frac{\partial^2 \tilde{u}}{\partial z^2} = \tilde{f}, \qquad (r,\theta,z)\in [-1,1]\times [-\pi,\pi]\times [-1,1],
\label{eq:DFSCylinder}
\end{equation}
where the $r$-variable is now over $[-1,1]$, instead of $[0,1]$. Here, the solution $u$ (resp.~$f$) is doubled-up as follows:
\begin{equation} 
\tilde{u}(r,\theta,z) = \begin{cases} u(r,\theta,z), & (r,\theta,z) \in [0,1]\times[-\pi,\pi]\times[-1,1], \\
u(-r,\theta+\pi,z), & (r,\theta,z) \in [-1,0]\times[-\pi,\pi]\times[-1,1] \end{cases}
\label{eq:doubledup}
\end{equation} 
and the homogeneous Dirichlet conditions become $\tilde{u}(\pm 1,\theta, z) = 0$ for $(\theta,z)\in[-\pi,\pi]\times [-1,1]$ and $\tilde{u}(r,\theta,\pm 1) = 0$ for $(r,\theta)\in[-1,1]\times [-\pi,\pi]$. Figure~\ref{fig:DFSCylinder} illustrates the DFS method when applied to a Rubik's cube-colored cylinder.

\begin{figure}
\centering
\subfloat[]{\includegraphics[width=.20\textwidth,trim={0 0.5cm 0 0},clip]{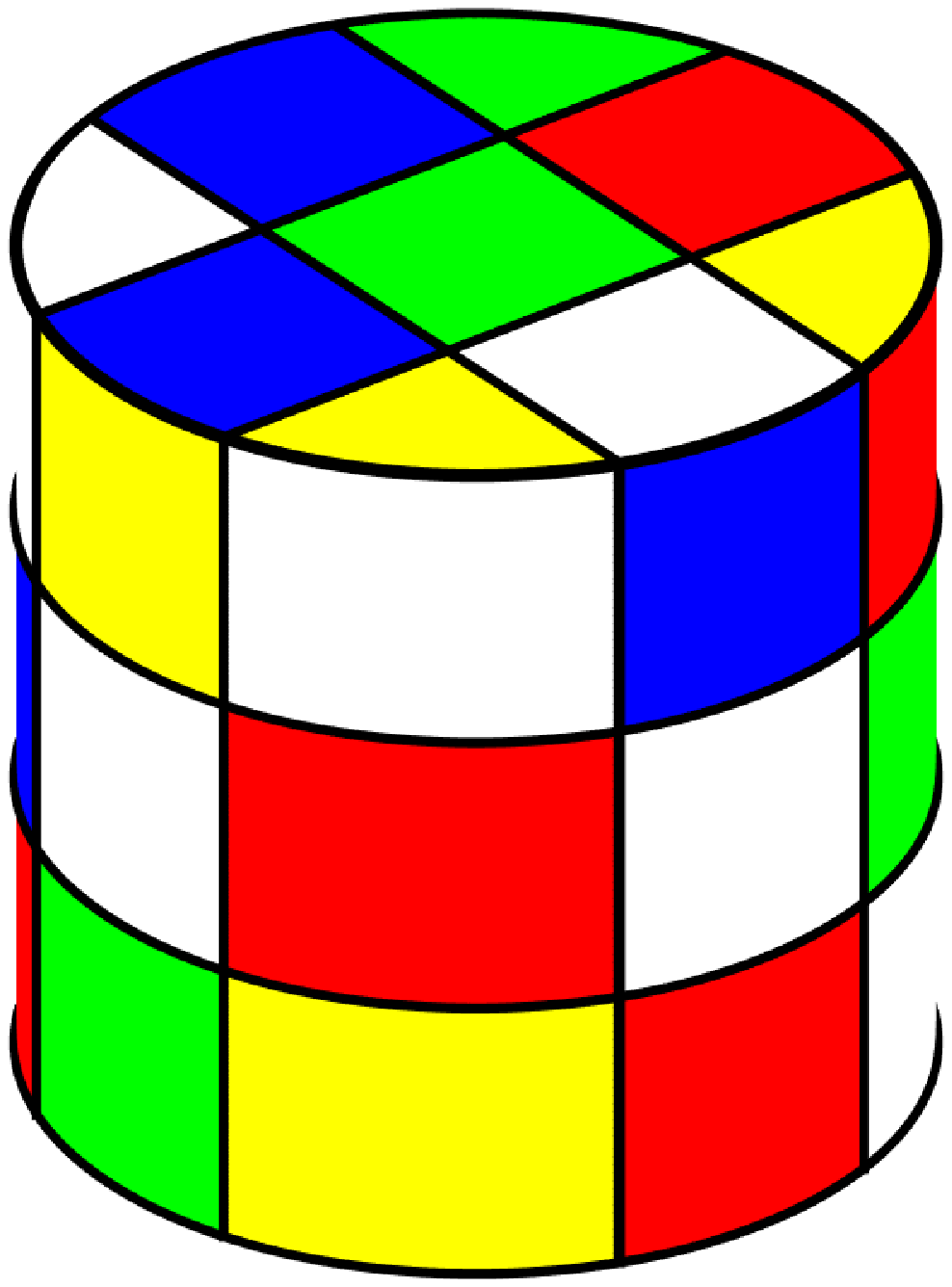}}~~~~~~
\subfloat[]{\includegraphics[width=.34\textwidth,trim={0 0.5cm 0 0},clip]{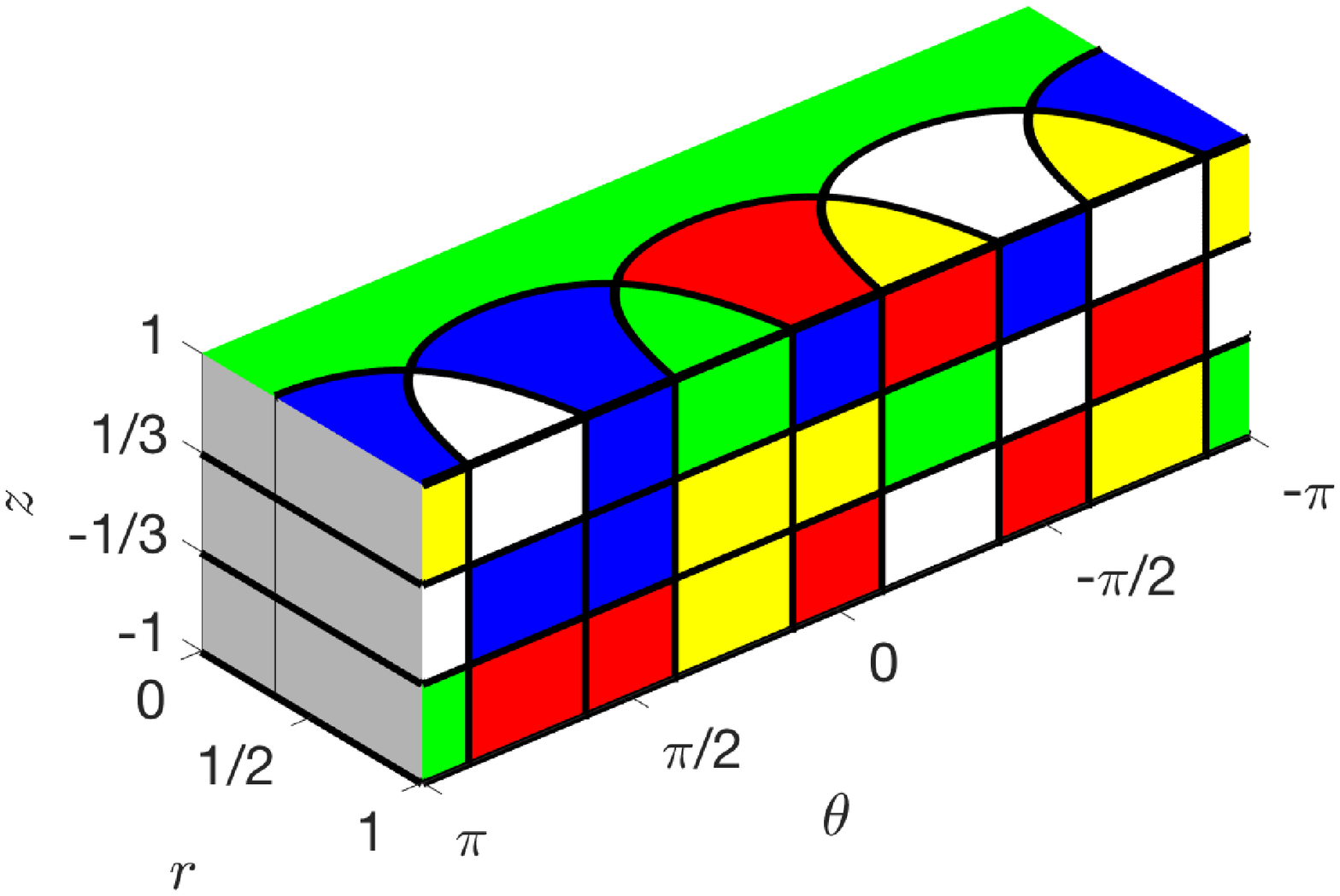}}~~~~
\subfloat[]{\includegraphics[width=.34\textwidth,trim={0 1cm 0 0},clip]{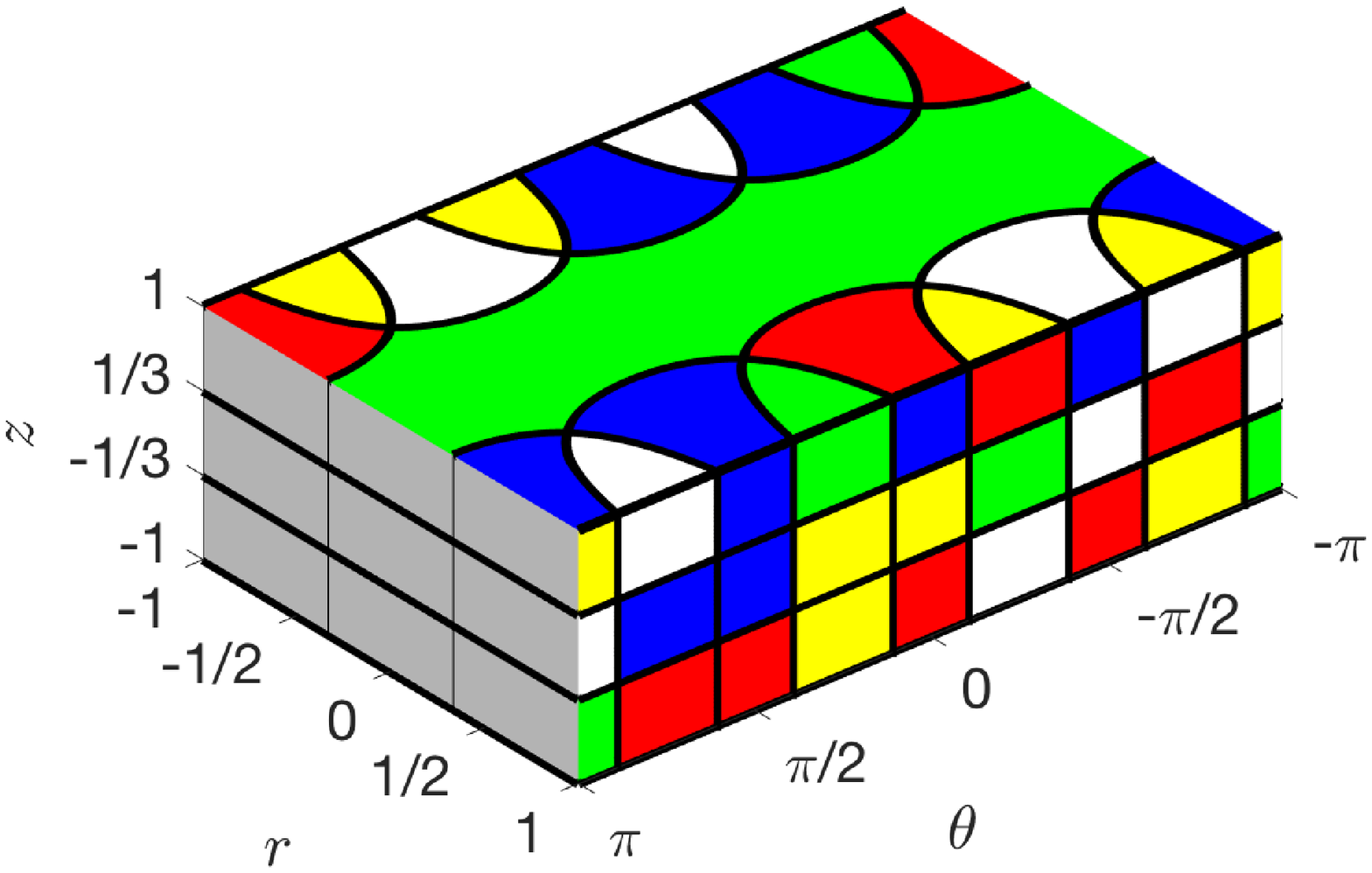}}
\caption{Illustration of the DFS method for a Rubik's cube-colored cylinder. (a) The Rubik's cube-colored cylinder. (b) The Rubik's cube-colored cylinder projected into cylindrical coordinates. (c) The Rubik's cube-colored cylinder after applying the DFS method. The DFS method represents a smooth function $f(x,y,z)$ on the cylinder with a function $f(r,\theta,z)$ on $[-1,1]\times[-\pi,\pi]\times[-1,1]$ that is $2\pi$-periodic in $\theta$ and $f(0,\theta,z)$ is a constant for each $\theta\in[-\pi,\pi]$ and $z\in[-1,1]$.}
\label{fig:DFSCylinder}
\end{figure}

The doubled-up functions $\tilde{u}$ and $\tilde{f}$ are non-periodic in the $r$- and $z$-variables, and $2\pi$-periodic in the $\theta$-variable.  Therefore, we seek the coefficients for $\tilde{u}$ in a Chebyshev--Fourier--Chebyshev expansion:
\begin{equation}
\tilde{u}(r,\theta,z) \approx \sum_{k=-n/2}^{n/2-1} \tilde{u}_k(r,z)e^{{\rm i} k \theta}, \qquad \tilde{u}_k(r,z) = \sum_{i=0}^{n-1} \sum_{j=0}^{n-1} X_{ij}^{(k)} T_i(r)T_j(z),
\label{eq:CylinderExpansion}
\end{equation}
where we assume that $n$ is an even integer and $\tilde{u}_k(r,z)$ denotes the $k$th Fourier mode of $\tilde{u}(r,\cdot,z)$. We have written the Chebyshev--Fourier--Chebyshev expansion in this form because it turns out that each Fourier mode can be solved for separately. Since $\tilde{f}(r,\theta,z) \approx \sum_{k=-n/2}^{n/2-1} \tilde{f}_k(r,z)e^{{\rm i} k \theta}$, we can plug~\eqref{eq:CylinderExpansion} into~\eqref{eq:DFSCylinder} to find that 
\begin{equation}
\frac{\partial^2 \tilde{u}_k}{\partial r^2} + \frac{1}{r}\frac{\partial  \tilde{u}_k}{\partial r} - \frac{k^2}{r^2}\tilde{u}_k + \frac{\partial^2 \tilde{u}_k}{\partial z^2} = \tilde{f}_k, \qquad (r,z)\in [-1,1]\times [-1,1],
\label{eq:EachFourierMode}
\end{equation} 
for each $-n/2\leq k\leq n/2-1$. This allows us to solve the trivariate PDE in~\eqref{eq:DFSCylinder} with a system of $n$ independent bivariate PDEs for each $u_k(r,z)$.

\subsubsection{Imposing partial regularity on the solution}
The issue with~\eqref{eq:CylinderExpansion} is that a Chebyshev--Fourier--Chebyshev expansion in $(r,\theta,z)$ does not necessarily represent a smooth function in $(x,y,z)$ on the cylinder. For instance, $\tilde{u}(0,\theta,z)$ must be a function of the $z$-variable only for the corresponding function on the cylinder to be continuous. Since we have $x = r\cos\theta$ and $y=r\sin\theta$, we know that the $k$th Fourier mode $\tilde{u}_k(r,z)$ must decay like $\mathcal{O}(r^{|k|})$ as $r\rightarrow 0$.  By the uniqueness of Fourier expansions, we also know that $\tilde{u}_k(\pm 1,z) =0$ and $\tilde{u}_k(r,\pm1)=0$ for $-n/2\leq k\leq n/2-1$. Therefore, we know that there must be a function\footnote{One can also show that $\tilde{v}_k(r,z)$ must be an even (odd) function of $r$ if $k$ is even (odd).} $\tilde{v}_k(r,z)$ such that 
\begin{equation}
\tilde{u}_k(r,z) = (1-r^2)(1-z^2) r^{|k|} \tilde{v}_k(r,z), \qquad -\frac{n}{2}\leq k\leq \frac{n}{2}-1. 
\label{eq:FullRegularity}
\end{equation}
Ideally, we would like to numerically compute for a bivariate Chebyshev expansion for $\tilde{v}_k(r,z)$ and then recover $\tilde{u}_k(r,z)$ from~\eqref{eq:FullRegularity}. This would ensure that the solution $\tilde{u}(r,\theta,z)$ corresponds to a smooth function on the cylinder. 

Unfortunately, imposing full regularity on $\tilde{u}_k(r,z)$ is numerically problematic because the regularity condition involves high-order monomial powers. The idea of imposing 
{\em partial regularity} on $\tilde{u}_k(r,z)$ avoids the high degree monomial terms~\cite{Torres_99_01}, and instead $\tilde{u}_{k}(r,z)$ is written as:
\begin{equation}
\tilde{u}_k(r,z) = (1-r^2)(1-z^2) r^{\min(|k|,2)} \tilde{\omega}_k(r,z), \qquad -\frac{n}{2}\leq k\leq \frac{n}{2}-1,
\label{eq:PartialRegularity}
\end{equation}
where the regularity requirements from~\eqref{eq:FullRegularity} is relaxed.  If the functions $\tilde{\omega}_k(r,z)$ are additionally imposed to be even (odd) in $r$ if $k$ is even (odd), then the the function $\tilde{u}(r,\theta,z)$ corresponds to at least a continuously differentiable function on the cylinder.  

\subsubsection{A solution method for each Fourier mode}
The partial regularity conditions in~\eqref{eq:PartialRegularity} naturally split into three cases that we treat separately: $|k|\geq 2$ (Case 1), $|k| = 1$ (Case 2), and $k=0$ (Case 3) .  In terms of developing a fast Poisson solver for~\eqref{eq:PoissonCylinder}, it is only important that the PDEs in~\eqref{eq:EachFourierMode} for $|k|\geq 2$ are solved in optimal complexity.

\subsubsection*{Case 1: $|k|\geq 2$}
The idea is to solve for the function $\tilde{\omega}_k(r,z)$, where $\tilde{u}_k(r,z) = r^2(1-r^2)(1-z^2)\tilde{\omega}_k(r,z)$ and afterwards to recover $\tilde{u}_k(r,z)$. To achieve this, we find the differential equation that $\tilde{\omega}_k(r,z)$ satisfies by substituting~\eqref{eq:PartialRegularity} into~\eqref{eq:EachFourierMode}. After simplifying, we obtain the following equation: 
\begin{equation}
\begin{aligned}
&\Bigg[\underbrace{r^2(1-r^2)\frac{\partial^2 \tilde{\omega}_k}{\partial r^2} + (5-9r^2)r\frac{\partial  \tilde{\omega}_k}{\partial r}+4(1-4r^2) \tilde{\omega}_k}_{=\mathcal{L}_1} - k^2(1-r^2) \tilde{\omega}_k \Bigg](1-z^2) \\&\qquad\qquad\qquad\qquad\qquad\qquad\qquad+ r^2(1-r^2)\underbrace{\left[(1-z^2)\frac{\partial^2  \tilde{\omega}_k}{\partial z^2} - 4z\frac{\partial  \tilde{\omega}_k}{\partial z} - 2 \tilde{\omega}_k\right]}_{=\mathcal{L}_2} =  \tilde{f}_k,
\end{aligned}
\label{eq:Case2}
\end{equation}
where no boundary conditions are required.  Focusing on the $z$-variable, we observe that $\mathcal{L}_2$ is identical to the differential equation in section~\ref{sec:CleverBasis}. Therefore, we represent the $z$-variable of $\tilde{\omega}_k(r,z)$ in an ultraspherical expansion because $\smash{\tilde{C}^{(3/2)}_j}$ is an eigenfunction of $\mathcal{L}_2$.  For the $r$-variable, we also use the $\tilde{C}^{(3/2)}$ basis because the multiplication matrix for $(1-r^2)$ is a normal matrix (see~\eqref{eq:UltraSmultiplication}).

Since the $k^2(1-r^2)\tilde{\omega}_k$ term dominates $\mathcal{L}_1$ when $k$ is large, the discretization of $\mathcal{L}_1 - k^2(1-r^2)\tilde{\omega}_k$ in the $\tilde{C}^{(3/2)}$ basis is a near-normal\footnote{A matrix is \emph{near-normal} if the condition number of its eigenvector matrix is small.} matrix; the matrix tends to a normal matrix as $k \rightarrow \infty$.  Therefore, we represent $\tilde{\omega}_k(r,z)$ as
\begin{equation} 
\tilde{\omega}_k(r,z) \approx \sum_{i=0}^{n-1} \sum_{j=0}^{n-1} Y_{ij}^{(k)}\tilde{C}_i^{(3/2)}(r) \tilde{C}_j^{(3/2)}(z).
\label{eq:OmegaExpansion} 
\end{equation} 

One can show that an $n\times n$ discretization of $\mathcal{L}_1$ is given by 
\[
L_1 = M_{r^2}D + 5M_rM_{1-r^2}D_1 + 14M_{1-r^2} - 6I,
\]
where $D$ is given in~\eqref{eq:GeneralizedSylvester}, $M_{1-r^2} = M$ (see~\eqref{eq:UltraSmultiplication}), $I$ is the $n\times n$ identity matrix, $M_{r^2} = I - M_{1-r^2}$, $M_{r}$ is multiplication by $r$ in the $\tilde{C}^{(3/2)}$ basis and $D_1$ is the first-order differentiation matrix.  While $D_1$ is a upper-triangular dense matrix, we note that $M_{1-r^2}D_1$ is a tridiagonal matrix from~\cite[(18.9.8) \& (18.9.19)]{NISTHandbook}. Moreover, $M_r$ is a tridiagonal matrix~\cite[Tab.~18.9.1]{NISTHandbook} and hence, $L_1$ is a pentadiagonal matrix. 

Looking at~\eqref{eq:Case2}, we find that the coefficient matrix $Y^{(k)}$ in~\eqref{eq:OmegaExpansion} satisfies
\[
(L_1-k^2M_{1-r^2})Y^{(k)}M_{1-r^2}^T + M_{r^2}M_{1-r^2}Y^{(k)}D = F_k,
\]
which after rearranging becomes the following Sylvester matrix equation:
\begin{equation}\label{eq:CylinderCase1}
AY^{(k)} - Y^{(k)}B = (L_1-k^2M_{1-r^2})^{-1}F_kD^{-1},
\end{equation}
where $A = (L_1-k^2M_{1-r^2})^{-1}M_{1-r^2}$ and $B = -M_{1-r^2}^TD^{-1}$.
Here, $B$ is a normal pentadiagonal matrix after a diagonal similarity transform and $A$ is a near-normal matrix which tends to a normal matrix as $k$ gets large. Moreover, we observe that $A$ has real eigenvalues that are well-separated from the eigenvalues of $B$ and we can solve linear systems of the form $(A-pI)x = b$ in $\mathcal{O}(n)$ operations as $(M_{1-r^2} - p(L_1-k^2M_{1-r^2}))x = (L_1-k^2M_{1-r^2})b$.  Therefore, we can apply ADI to \eqref{eq:CylinderCase1} to solve for each $Y^{(k)}$ in $\mathcal{O}(n^2 (\log n)^2 \log(1/\epsilon))$ operations. Since there are $\mathcal{O}(n)$ such $Y^{(k)}$, the total complexity is $\mathcal{O}(n^3 (\log n)^2 \log(1/\epsilon))$. We recover $\tilde{u}_k(r,z)$ via the relation $\tilde{u}_k(r,z) = r^2(1-r^2)(1-z^2)\tilde{\omega}_k(r,z)$.

\subsubsection*{Case 2: $|k| = 1$}
We continue to represent $\tilde{\omega}_k(r,z)$ in the expansion~\eqref{eq:OmegaExpansion}. When $|k| = 1$, we find that $\tilde{\omega}_{k}(r,z)$ satisfies the following partial differential equation:
\[
\begin{aligned}
&\underbrace{\Bigg[r(1-r^2)\frac{\partial^2 \tilde{\omega}_k}{\partial r^2} + (3-7r^2)\frac{\partial  \tilde{\omega}_k}{\partial r} - 8r \tilde{\omega}_k\Bigg]}_{=\mathcal{L}_3}\!(1-z^2) \\&\qquad\qquad\qquad\qquad\qquad\qquad\qquad + r(1-r^2)\!\!\left[(1-z^2)\frac{\partial^2  \tilde{\omega}_k}{\partial z^2} - 4z\frac{\partial  \tilde{\omega}_k}{\partial z} - 2 \tilde{\omega}_k\right] =  \tilde{f}_k.
\end{aligned} 
\]
We can discretize this as 
\[
L_3 Y^{(k)} M_{1-r^2}^T + M_{r}M_{1-r^2}Y^{(k)}D = F_k
\]
and solve the Bartels--Stewart algorithm, costing $\mathcal{O}(n^3)$ operations.  Since there are only two Fourier modes with $|k| = 1$, this does not dominate the overall computational complexity of the Poisson solver. We recover $\tilde{u}_{k}(r,z)$ via the relation $\tilde{u}_k(r,z) = r(1-r^2)(1-z^2)\tilde{\omega}_k(r,z)$. 

\subsubsection*{Case 3: $k = 0$}
Finally, the zero Fourier mode satisfies $\tilde{u}_0(r,z) = (1-r^2)(1-z^2)\tilde{\omega}_0(r,z)$ where
\[
\begin{aligned}
&\underbrace{\Bigg[r^2(1-r^2)\frac{\partial^2 \tilde{\omega}_0}{\partial r^2} + (1-5r^2)r\frac{\partial  \tilde{\omega}_0}{\partial r} - 4r^2 \tilde{\omega}_0\Bigg]}_{=\mathcal{L}_4}\!(1-z^2) \\&\qquad\qquad\qquad\qquad\qquad\qquad\qquad+ r^2(1-r^2)\!\!\left[(1-z^2)\frac{\partial^2  \tilde{\omega}_0}{\partial z^2} - 4z\frac{\partial  \tilde{\omega}_0}{\partial z} - 2 \tilde{\omega}_0\right] =  r^2\tilde{f}_0.
\end{aligned}
\]
We can discretize this as $L_4Y^{(0)}M_{1-r^2}^T + M_{r^2}M_{1-r^2}Y^{(0)}D = M_{r^2}F_0$ and solve using the Bartels--Stewart algorithm, costing $\mathcal{O}(n^3)$ operations. Again, this cost is negligible since there is only one Fourier mode with $k=0$.

\begin{figure}
	\centering
	\includegraphics[width=0.3\textwidth,trim={3cm -0.5cm 0 0},clip]{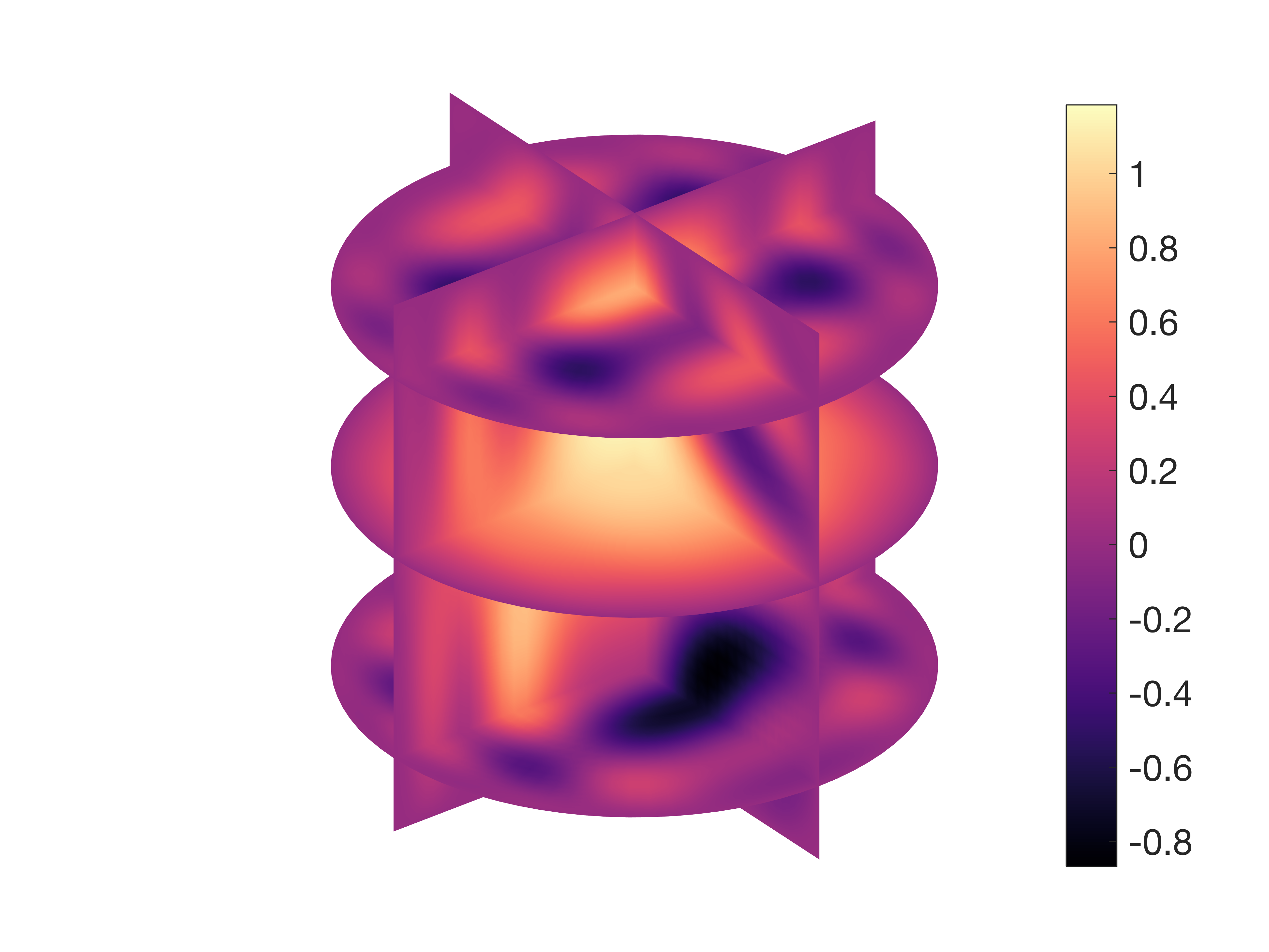}%
	~~
	 \begin{overpic}[width=0.39\textwidth]{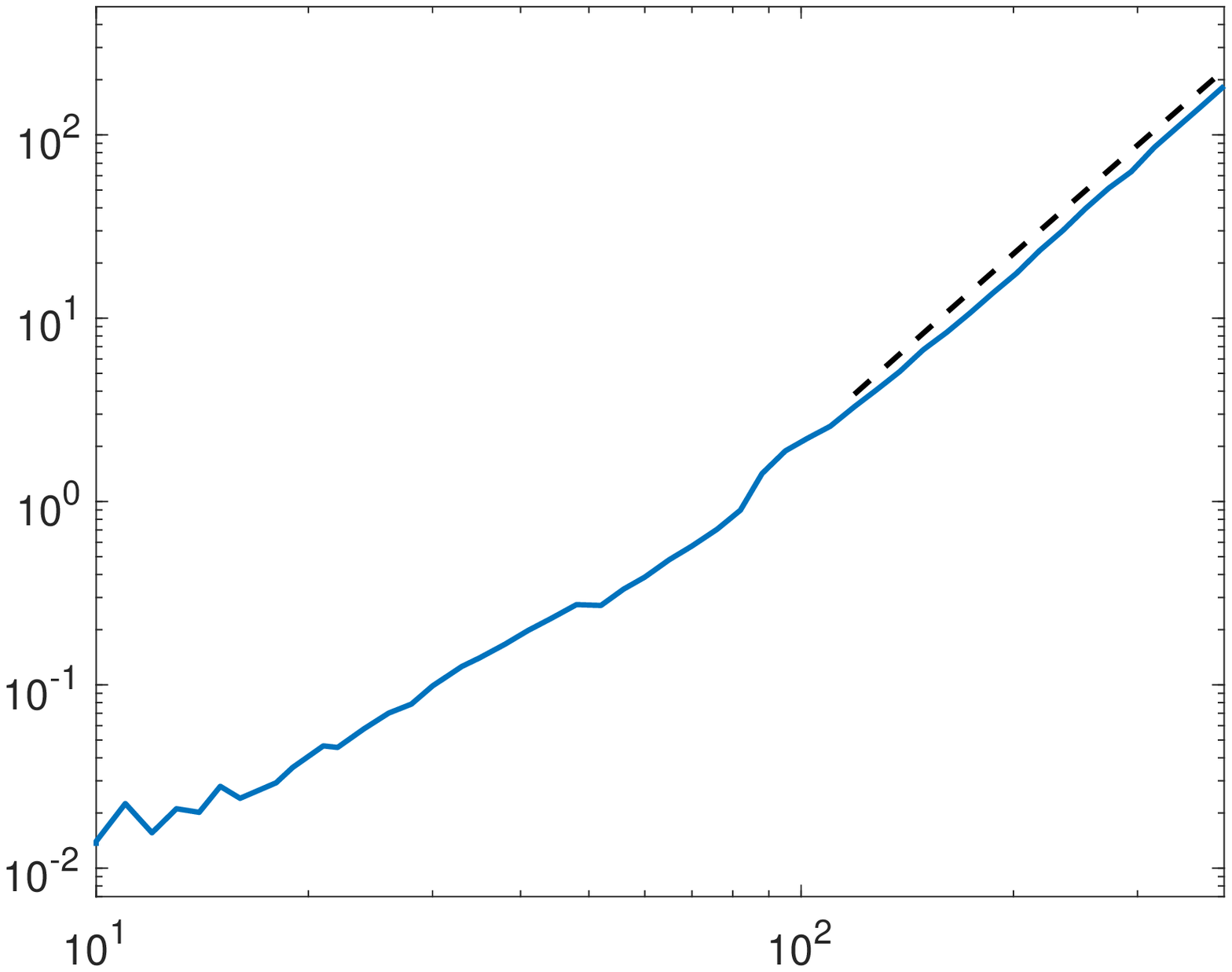}
		\put(-2,15) {\footnotesize \rotatebox{90}{Execution time (s)}}
		\put(50,-3) {\footnotesize $n$}
		\put(60,43) {\rotatebox{42}{\footnotesize$\mathcal{O}(n^3 (\log n)^2)$}}
	\end{overpic}%
	\includegraphics[width=0.29\textwidth,trim={5cm 0.5cm 0 1cm},clip]{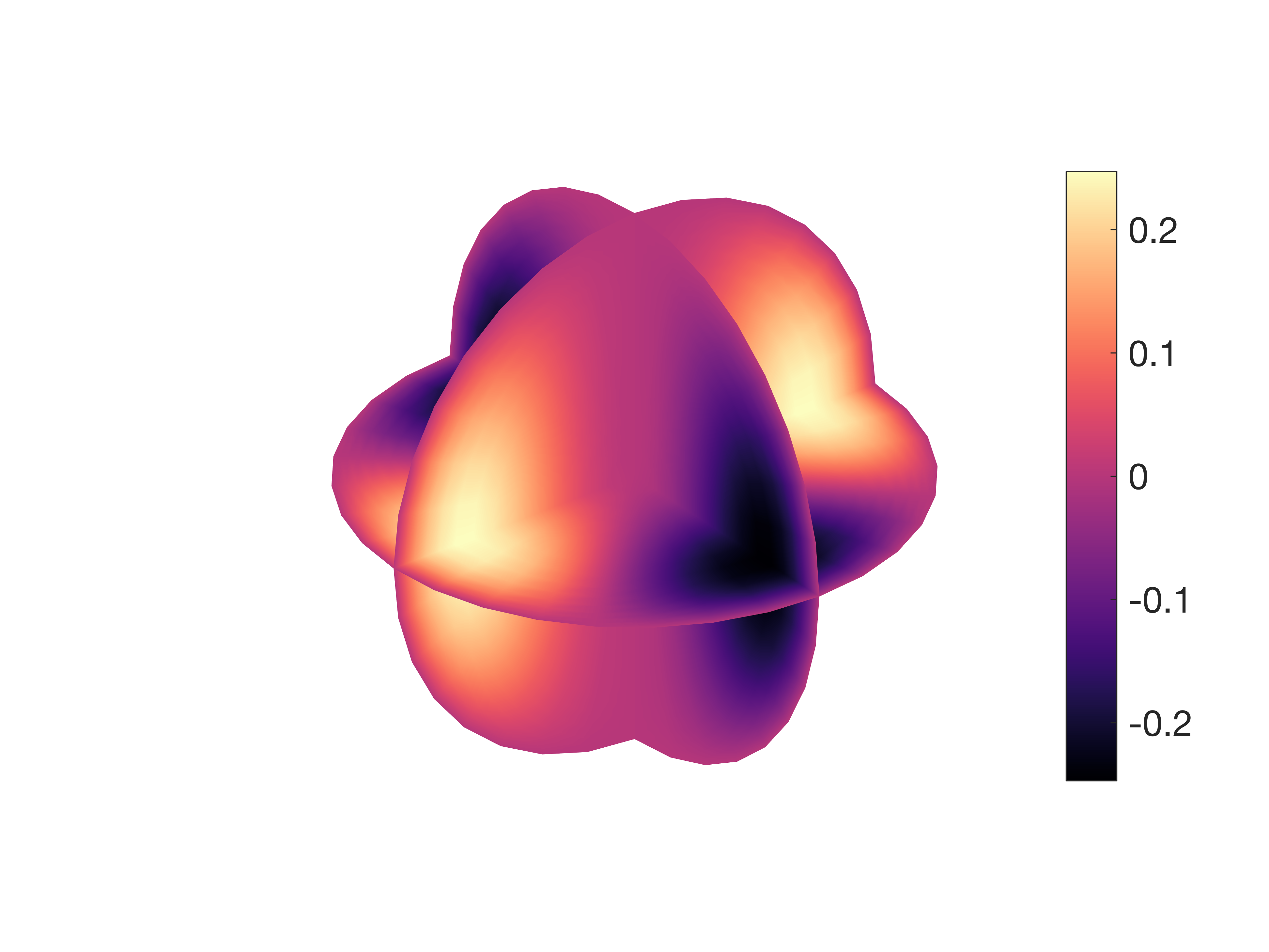}%
	\caption{Left: A computed solution to Poisson's equation on the cylinder, shown on various slices through the cylinder. The right-hand side $f$ is such that the exact solution is $u(x,y,z) = (1-x^2-y^2)(1-z^2)(z \cos 4\pi x^2 + \cos 4\pi yz)$. Middle: Execution times for the Poisson solver on the cylinder with an error tolerance of $\epsilon = 10^{-13}$. Right: A computed solution to Poisson's equation on the solid sphere, shown on various slices through the sphere. The right-hand side $f$ is such that the exact solution is $u(r,\theta,\phi) = (1-r^2)(r\sin\phi)^2 e^{{\rm i} 2 \theta}$.}
	\label{fig:CylinderAndSphereResults}
\end{figure}

\medskip 

Figure \ref{fig:CylinderAndSphereResults} shows a computed solution to Poisson's equation on the cylinder using this algorithm and confirms the optimal complexity of the resulting solver. Our Poisson solver on the cylinder can be accessed in~\cite{GithubRepo} via the command \texttt{poisson\_cylinder(F, tol)}, where \texttt{F} is the tensor of trivariate Chebyshev--Fourier--Chebyshev coefficients for the doubled-up right-hand side and \texttt{tol} is the error tolerance.

\subsection{A fast spectral Poisson solver on the solid sphere}\label{sec:PoissonSphere}
Consider Poisson's equation on the unit ball, i.e., $u_{xx} + u_{yy} + u_{zz} = f$ on $x^2+y^2+z^2 \in[0,1]$ with homogeneous Dirichlet conditions. Our first step is to change to the spherical coordinate system, i.e., $(x,y,z) = (r\cos\theta\sin\phi, r\sin\theta\sin\phi, r\cos\phi)$ where $r\in[0,1]$ is the radial variable, $\theta\in[-\pi,\pi]$ is the azimuthal variable, and $\phi\in[0,\pi]$ is the polar variable. This change of variables transforms Poisson's equation to
\begin{equation}
\frac{\partial^2 u}{\partial r^2} + \frac{2}{r}\frac{\partial u}{\partial r} + \frac{1}{r^2}\frac{\partial^2 u}{\partial \phi^2} + \frac{\cos\phi}{r^2\sin\phi}\frac{\partial u}{\partial \phi} + \frac{1}{r^2\sin^2\phi}\frac{\partial^2 u}{\partial \theta^2} = f
\label{eq:PoissonSolidSphere}
\end{equation}
for $(r,\theta,\phi)\in [0,1]\times [-\pi,\pi]\times [0,\pi]$, where $u(1,\theta,\phi) = 0$ for $(\theta,\phi)\in[-\pi,\pi]\times [0,\pi]$.

Similar to the cylinder, we use the DFS method to double-up $u$ and $f$ in both the $r$- and $\phi$-variables and solve for $\tilde{u}$ over the domain $(r,\theta,\phi) \in [-1,1] \times [-\pi,\pi] \times [-\pi,\pi]$. The doubled-up functions are non-periodic in the $r$-variable and $2\pi$-periodic in the $\theta$- and $\phi$-variables, leading us to seek the coefficients for $\tilde{u}$ in a Chebyshev--Fourier--Fourier expansion:
\[
\tilde{u}(r,\theta,\phi) \approx \sum_{k=-n/2}^{n/2-1} \tilde{u}_k(r,\phi)e^{{\rm i}k\theta}, \qquad \tilde{u}_k(r,\phi) = \sum_{j=0}^{n-1} \sum_{\ell=-n/2}^{n/2-1} X_{j\ell}^{(k)} T_j(r) e^{{\rm i}\ell\phi},
\]
where again we have written the expansion in this form because each Fourier mode in $\theta$ can be solved for separately.

As in the cylinder case, to ensure smoothness in $(x,y,z)$ on the solid sphere we will impose partial regularity on $\tilde{u}_k(r,\phi)$. Since we have $x = r \cos\theta \sin\phi$ and $y = r \sin\theta \sin\phi$, we know that the $k$th $\theta$-Fourier mode $\tilde{u}_k(r,\phi)$ must decay like $\mathcal{O}((r \sin\phi)^{|k|})$ as $r \sin\phi \rightarrow 0$. Therefore, we impose the partial regularity condition:
\[
\tilde{u}_k(r,\phi) = (1-r^2) (r \sin \phi)^{\min(|k|,2)} \tilde{\omega}_k(r,\phi), \qquad -\frac{n}{2}\leq k\leq \frac{n}{2}-1,
\]
and solve for $\tilde{\omega}_k(r,\phi)$. Again, the partial regularity requirement naturally splits into three cases that we treat separately: $|k| \geq 2$, $|k| = 1$, and $k = 0$. If we represent the $r$-variable of $\tilde{\omega}_k(r,\phi)$ using the $\tilde{C}_i^{(3/2)}$ basis in $r$, then for $|k| \geq 2$ we obtain $n$ decoupled sparse Sylvester matrix equations with near-normal matrices which we can solve using ADI in $\mathcal{O}(n^2 (\log n)^2 \log(1/\epsilon))$ operations. For $k = -1, 0, 1$, we use the Bartels--Stewart algorithm to solve the Sylvester equation directly in $\mathcal{O}(n^3)$ operations.

Figure \ref{fig:CylinderAndSphereResults} shows a computed solution to Poisson's equation on the solid sphere using this algorithm. Our Poisson solver on the solid sphere can be accessed in~\cite{GithubRepo} via the command \texttt{poisson\_solid\_sphere(F, tol)}, where \texttt{F} is the tensor of trivariate Chebyshev--Fourier--Fourier coefficients for the doubled-up right-hand side and \texttt{tol} is the error tolerance.

\section{A fast spectral Poisson solver on the cube}\label{sec:PoissonCube}
Consider Poisson's equation on the cube with homogeneous Dirichlet conditions:
\begin{equation}\label{eq:PoissonCube}
u_{xx} + u_{yy} + u_{zz} = f, \quad (x,y,z) \in [-1,1]^3, \quad 
u(\pm 1,\cdot,\cdot) = u(\cdot,\pm 1, \cdot) = u(\cdot,\cdot,\pm 1) = 0
\end{equation}
From section~\ref{sec:PoissonSquare}, we can discretize~\eqref{eq:PoissonCube} as 
\begin{equation}\label{eq:KronCube}
\left( D_{xx} + D_{yy} + D_{zz} \right) \vecop(X) = \vecop(F),
\end{equation}
where $X, F \in \mathbb{C}^{n \times n \times n}$, $D_{xx} = A \otimes A \otimes I$, $D_{yy} = A \otimes I \otimes A$, and $D_{zz} = I \otimes A \otimes A$. Here, $A = D^{-1} M$ is the pentadiagonal matrix from section~\ref{sec:PoissonSquare}, $I$ is the $n \times n$ identity matrix, `$\otimes$' is the Kronecker product, and $\vecop(\cdot)$ is the vectorization operator.

Unlike for the cylinder and sphere, there is no decoupling that allows us to reduce the three-term equation into $n$ two-term equations. Therefore, we would like to solve~\eqref{eq:KronCube} using a generalization of the ADI method without constructing the large Kronecker product matrices; however, it is unclear how to generalize ADI to handle more than two terms at a time \cite[p.~31]{Wachspress_13_01}. Instead, we employ the nested ADI method described in~\cite{Wachspress_94_01}. This simply involves grouping the first two terms together and performing the ADI-like iteration given by
\begin{align}
\label{eq:CubeADI1.1}
(D_{zz} - p_{i,1}I) \vecop(X_{i+1/2}) &= \vecop(F) - ((D_{xx} + D_{yy}) - p_{i,1}I) \vecop(X_i) \\
\label{eq:CubeADI1.2}
((D_{xx} + D_{yy}) - q_{i,1}I) \vecop(X_{i+1}) &= \vecop(F) - (D_{zz} - q_{i,1}I) \vecop(X_{i+1/2})
\end{align}
for suitable choices of the shift parameters $p_{i,1}$ and $q_{i,1}$. Since the matrices $D_{xx}$, $D_{yy}$, and $D_{zz}$ are Kronecker products involving two copies of $A$ and the identity matrix, it can be shown that the eigenvalue bounds on $D_{xx}$, $D_{yy}$, and $D_{zz}$ are the same as in section~\ref{sec:PoissonSquare}, but squared. Thus, we require $\mathcal{O}(\log n)$ iterations of \eqref{eq:CubeADI1.1}--\eqref{eq:CubeADI1.2}.

To solve the two-term equation \eqref{eq:CubeADI1.2}, we can apply a nested ADI iteration to the matrices $D_{xx} - \frac{q_{i,1}}{2}I$ and $D_{yy} - \frac{q_{i,1}}{2}I$ as follows: 
\begin{align}
\label{eq:CubeADI2.1}
\left(\left(D_{xx} - \tfrac{q_{i,1}}{2}I\right) - p_{j,2}I\right) \vecop(Y_{j+1/2}) = F_i - \left(\left(D_{yy} - \tfrac{q_{i,1}}{2}I\right) - p_{j,2}I\right) \vecop(Y_j) \quad\;\\
\label{eq:CubeADI2.2}
\;\left(\left(D_{yy} - \tfrac{q_{i,1}}{2}I\right) - q_{j,2}I\right) \vecop(Y_{j+1}) = F_i - \left(\left(D_{xx} - \tfrac{q_{i,1}}{2}I\right) - q_{j,2}I\right) \vecop(Y_{j+1/2})
\end{align}
where $F_i = \vecop(F) - (D_{zz} - q_{i,1}I) \vecop(X_{i+1/2})$. After the iteration converges, the solution to~\eqref{eq:CubeADI1.2} is obtained as $X_{i+1} := Y_{j+1}$. For the optimal choices of $p_{j,2}$ and $q_{j,2}$ (see section~\ref{sec:ADIMethod}) we expect \eqref{eq:CubeADI2.1}--\eqref{eq:CubeADI2.2} to converge in $\mathcal{O}(\log n)$ iterations.

Finally, we are left with solving the three linear systems~\eqref{eq:CubeADI1.1},~\eqref{eq:CubeADI2.1}, and~\eqref{eq:CubeADI2.2}, which each involve a shifted Kronecker system. Each Kronecker system is actually degenerate in one dimension, due to the presence of the identity matrix. Thus, we can decouple~\eqref{eq:CubeADI1.1},~\eqref{eq:CubeADI2.1}, and~\eqref{eq:CubeADI2.2} along that degenerate dimension and solve $n$ decoupled systems independently. For example, to solve~\eqref{eq:CubeADI1.1} for $X_{i+1/2}$ we solve
\begin{equation}\label{eq:CubeDecoupled}
A X_{i+1/2}(:,:,k) A^T - p_{i,1} X_{i+1/2}(:,:,k) = F_i(:,:,k), \qquad 1\leq k\leq n,
\end{equation}
where $X(:,:,k)$ denotes the $k$th slice of the tensor $X$ in the $z$-dimension and $F_i = \vecop(F) - ((D_{xx} + D_{yy}) - p_{i,1}I) \vecop(X_i)$. To solve each of the decoupled systems \eqref{eq:CubeDecoupled}, we can perform yet another nested ADI iteration. If we rewrite \eqref{eq:CubeDecoupled} in the form
\[
p_{i,1} A^{-1} X_{i+1/2}(:,:,k) - X_{i+1/2}(:,:,k) A^T = A^{-1} F_i(:,:,k)
\]
then the iteration for each $k$ becomes
\begin{align}
Z_{\ell+1/2} (A^T - p_{\ell,3}I) &= A^{-1} F_i(:,:,k) - (p_{i,1}A^{-1} - p_{\ell,3}I) Z_\ell \label{eq:Iteration2.1} \\
(p_{i,1}I - q_{\ell,3}A) Z_{\ell+1} &= A F_i(:,:,k) - A Z_{\ell+1/2} (A^T - q_{\ell,3}I).\label{eq:Iteration2.2}
\end{align}
After the iteration converges, the solution to~\eqref{eq:CubeADI1.1} is obtained for each $k$ as \sloppy${X_{i+1/2}(:,:,k) := Z_{\ell+1}}$. Note that we have multiplied \eqref{eq:Iteration2.2} by $A$ so that \eqref{eq:Iteration2.1}--\eqref{eq:Iteration2.2} can be solved fast. For suitable choices of $p_{\ell,3}$ and $q_{\ell,3}$, this will converge in $\mathcal{O}(\log n)$ iterations. Thus, as in section \ref{sec:PoissonSquare}, each of the $n$ decoupled equations can be solved in $\mathcal{O}(n^2 \log n)$ operations, allowing~\eqref{eq:CubeADI1.1},~\eqref{eq:CubeADI2.1}, and~\eqref{eq:CubeADI2.2} to be solved in $\mathcal{O}(n^3 \log n)$ operations. Since there are two levels of nested ADI iterations above this inner computation, the solution to~\eqref{eq:PoissonCube} requires $\mathcal{O}(n^3 (\log n)^3 \log(1/\epsilon))$ operations.

\begin{figure}
	\centering
	\includegraphics[width=0.42\textwidth,trim={3cm -0.5cm 0 0},clip]{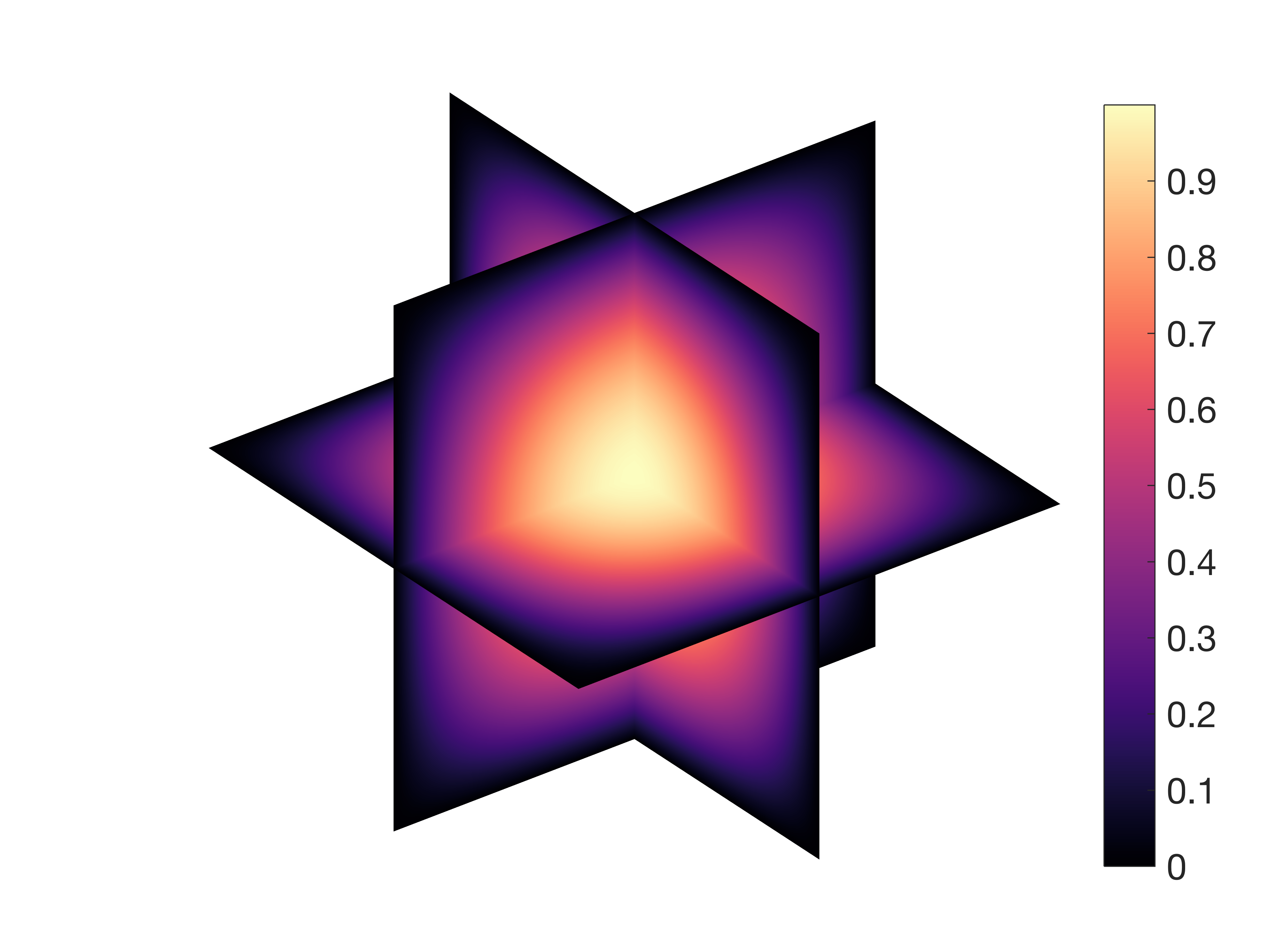}%
	~~~
	 \begin{overpic}[width=0.5\textwidth]{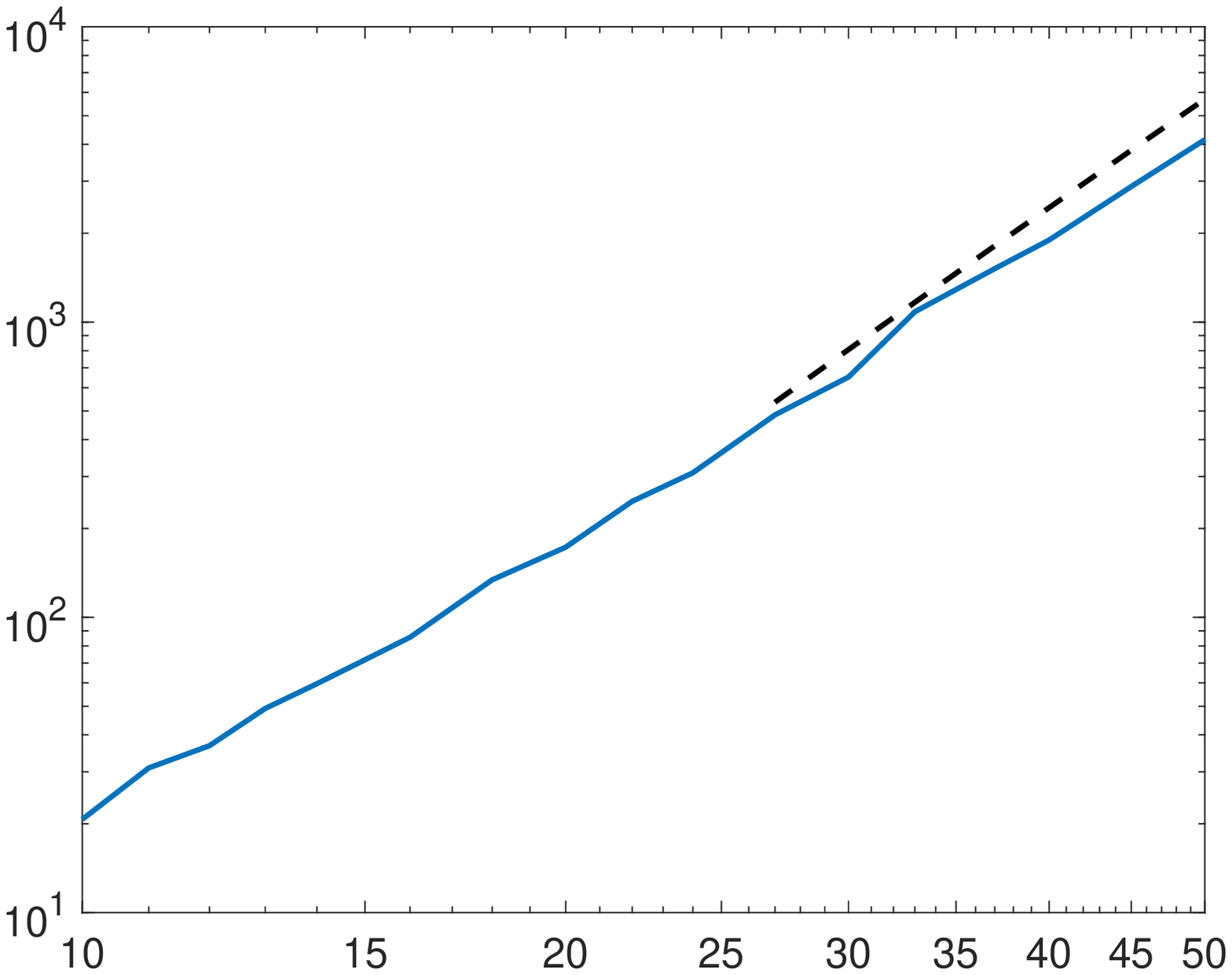}
		\put(-2,17) {\rotatebox{90}{Execution time (s)}}
		\put(50,-3) {$n$}
		\put(58,46) {\rotatebox{35}{$\mathcal{O}(n^3 (\log n)^3)$}}
	\end{overpic}%
	\caption{Left: A computed solution to Poisson's equation on the cube, shown on various slices through the cube. The right-hand side $f$ is such that the exact solution is $u(x,y,z) = (1-x^2)(1-y^2)(1-z^2)\cos(xyz^2)$. Right: Execution times for the Poisson solver on the cube with an error tolerance of $\epsilon = 10^{-13}$.}
	\label{fig:CubeResults}
\end{figure}

Figure~\ref{fig:CubeResults} shows a computed solution to Poisson's equation on the cube using this algorithm and confirms the optimal complexity of the resulting solver. We stress that though this is observed to be an optimal complexity spectral method to solve~\eqref{eq:PoissonCube}, it is far from a practical algorithm; the inner ADI iterations must be solved to machine precision to assure that the outer iterations will converge, resulting in large algorithmic constants that dominate for realistic choices of $n$. As in section~\ref{sec:PoissonSquare}, the solver can also be extended to general box-shaped domains. Our Poisson solver on the cube can be accessed in~\cite{GithubRepo} via the command \texttt{poisson\_cube(F, tol)}, where \texttt{F} is the tensor of trivariate Chebyshev coefficients for the right-hand side and \texttt{tol} is the error tolerance.

\section{Nontrivial boundary conditions}\label{sec:OtherBCs}
So far we have assumed zero homogeneous Dirichlet boundary conditions. We now describe how to extend our method to handle more general boundary conditions.
\subsection{Nonhomogeneous Dirichlet conditions}
To extend our solver to handle nonhomogeneous Dirichlet conditions, we convert the nonhomogeneous problem into a homogeneous one by moving the boundary conditions to the right-hand side. That is,
\vspace{1em}
\begin{enumerate}[leftmargin=*]
\item Compute the coefficients $X_{\text{bc}}$ of a function $u_{\text{bc}}$ satisfying the Dirichlet data but not necessarily satisfying Poisson's equation.
\item Compute the Laplacian of $u_{\text{bc}}$.
\item Solve the modified equation $\nabla^2 u_{\text{rhs}} = f - \nabla^2 u_{\text{bc}}$ with zero homogeneous Dirichlet boundary conditions for the coefficients $X_{\text{rhs}}$.
\item The original solution is then obtained as $X = X_{\text{rhs}} + X_{\text{bc}}$.
\end{enumerate}
\vspace{1em}
Note that the above steps are in coefficient space and can be done fast. This treatment of Dirichlet conditions works for any of the domains discussed in this paper.

\subsection{Neumann and Robin}
For Neumann or Robin boundary conditions we must abandon bases containing $(1-x^2)$ factors and employ a more general discretization scheme.  The ultraspherical spectral method \cite{Olver_13_01, Townsend_15_01} discretizes linear PDEs by generalized Sylvester matrix equations with sparse, well-conditioned matrices and can handle boundary conditions in the form of general linear constraints. For Poisson's equation with Neumann or Robin boundary conditions, the method results in a two-term Sylvester equation with pentadiagonal matrices except for a few dense rows. Experiments indicate that the eigenvalues of the matrices lie within disjoint intervals similar to those in section \ref{sec:PoissonSquare}, but this is not theoretically justified. However, in practice we observe that applying the ADI method to these Sylvester matrix equations computes a solution in an optimal number of operations.



\section*{Acknowledgments}
We are grateful to Heather Wilber, Grady Wright, Marcus Webb, Mika\"{e}l Slevinsky, Ricardo Baptista, and Chris Rycroft for their detailed comments on a draft of the paper. Grady Wright wrote the code for Figure~\ref{fig:DFSCylinder}. We have also benefited from discussions with Sheehan Olver, Gil Strang, and Nick Trefethen.

\bibliographystyle{siamplain}
\bibliography{references}

\appendix

\section{MATLAB code to compute ADI shifts}\label{app:ADIshifts}
Below we provide the MATLAB code that we use to compute the ADI shifts in \eqref{eq:OptimalShifts}. Readers may notice that in~\eqref{eq:OptimalShifts} the arguments of the complete elliptic integral and Jacobi elliptic functions involve $\sqrt{1-1/\alpha^2}$, while the arguments in the code involve $1-1/\alpha^2$, i.e., square roots are missing in the code. This is an esoteric MATLAB convention of the {\tt ellipke} and {\tt ellipj} commands, which we believe is for numerical accuracy. If one attempts to rewrite our code in another programming language, then one needs to be careful about the conventions in the analogues of the {\tt ellipke} and {\tt ellipj} commands.
\lstinputlisting[frame=single]{ADIshifts.m}

\section{Bounding eigenvalues using Gershgorin's circle theorem}\label{app:Gershgorin}In section~\ref{sec:PoissonSquare} a spectral discretization of Poisson's equation on the square is derived as $\tilde{A}X - X\tilde{B} = F$, where $\tilde{A}$ is a real symmetric pentadiagonal matrix and $\tilde{B} = -\tilde{A}^T$. Here, we prove that P2 holds for the Sylvester matrix equation by showing that $\sigma(\tilde{A})\in [-1,-1/(30n^4)]$. Our main tool is Gershgorin's circle theorem~\cite{Gershgorin_31_01}.
\begin{theorem}[Gershgorin]
\label{thm:Gershgorin}
Let $A \in \mathbb{C}^{n \times n}$ and $\lambda(A)$ be an eigenvalue of $A$. Then, for some $1\leq i\leq n$, we have
\[
\lambda(A) \in \left\{ z\in \mathbb{C} : |z - A_{ii}| \leq \sum_{\substack{j = 1 \\ j \neq i}}^n \lvert A_{ij} \rvert \right\}. 
\]
\end{theorem}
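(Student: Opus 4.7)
The plan is to give the classical one-page proof via the eigenvector with largest-magnitude entry. Let $\lambda$ be an eigenvalue of $A$ and choose a nonzero eigenvector $v \in \mathbb{C}^n$ satisfying $Av = \lambda v$. Since $v \neq 0$, there exists an index $i$ with $|v_i| = \max_{1 \leq k \leq n} |v_k| > 0$. This choice of $i$ is exactly the index we will exhibit for the conclusion.

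Next I would write out the $i$-th scalar equation in $Av = \lambda v$, namely $\sum_{j=1}^n A_{ij} v_j = \lambda v_i$, and isolate the diagonal contribution to obtain $(\lambda - A_{ii}) v_i = \sum_{j \neq i} A_{ij} v_j$. Taking absolute values on both sides and applying the triangle inequality gives $|\lambda - A_{ii}| \, |v_i| \leq \sum_{j \neq i} |A_{ij}| \, |v_j|$. Dividing by $|v_i| > 0$ and using $|v_j|/|v_i| \leq 1$ (by the maximality of $|v_i|$) yields $|\lambda - A_{ii}| \leq \sum_{j \neq i} |A_{ij}|$, which is precisely the desired membership.

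There is no real obstacle here, as the proof is essentially a two-line calculation; the only subtle point is making sure that the index $i$ is chosen \emph{after} the eigenvalue and eigenvector are fixed (so that $i$ may depend on $\lambda$ and $v$), which matches the ``for some $1 \leq i \leq n$'' quantifier in the statement. I would also briefly note that $|v_i| > 0$ is guaranteed by $v \neq 0$, so that the division step is valid, and that this argument goes through verbatim for complex $A$ since we only use the triangle inequality on $\mathbb{C}$.
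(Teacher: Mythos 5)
Your proof is correct: this is the classical argument via the maximal-modulus component of an eigenvector, and every step (the row equation, the triangle inequality, the division by $|v_i|>0$) is valid. The paper does not prove this theorem at all---it states it with a citation to Gershgorin's original work---so your standard two-line proof is exactly what would be expected and needs no changes.
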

The bound on the spectrum of $\tilde{A}$ is stated in the following lemma, which we use to determine the number of ADI iterations for our fast Poisson solver on the square. 
\begin{lemma}
Let $\tilde{A}\in\mathbb{C}^{n\times n}$ be the matrix given in~\eqref{eq:ADIfriendlySylvester}. Then, 
\begin{equation}
\sigma(\tilde{A}) \subset \left[ -1, -\frac{1}{30n^4} \right],
\label{eq:SpectrumBound}
\end{equation} 
where $\sigma(\tilde{A})$ is the spectrum of $\tilde{A}$. 
\label{lem:EigenvaluesOfA}
\end{lemma}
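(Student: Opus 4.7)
The plan is to split the lemma into the two inequalities $\lambda \geq -1$ and $\lambda \leq -1/(30n^4)$ for $\lambda \in \sigma(\tilde{A})$, using very different tools for each. I would start by computing the entries of $\tilde{A}$ explicitly from $\tilde{A} = D_s^{-1} D^{-1} M D_s$, where $(D_s)_{jj} = 1/\sqrt{(j+1)(j+2)}$ is the diagonal similarity chosen in section~\ref{sec:PoissonSquare} and $D_{jj} = -(j+1)(j+2)$: a direct calculation using~\eqref{eq:UltraSmultiplication} yields
\[
\tilde{A}_{jj} = -\frac{2}{(2j+1)(2j+5)}, \qquad \tilde{A}_{j,j+2} = \frac{1}{(2j+5)\sqrt{(2j+5)^2-4}},
\]
with all other entries zero, so that $\tilde{A}$ is real symmetric pentadiagonal with vanishing first off-diagonals.

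For the lower bound I would apply Theorem~\ref{thm:Gershgorin} directly to $\tilde{A}$: every eigenvalue satisfies $\lambda \geq \tilde{A}_{jj} - R_j$ for some $j$, where $R_j = |\tilde{A}_{j,j-2}| + |\tilde{A}_{j,j+2}|$. Using the elementary inequality $\sqrt{(2j+1)^2-4} \geq 2j-1$ for $j \geq 1$ to estimate $R_j \leq 1/(4j^2-1) + 1/((2j+3)(2j+5))$, checking the cases $j = 0, 1$ by direct substitution, and giving a uniform estimate for $j \geq 2$, together yield $\tilde{A}_{jj} - R_j \geq -1$ for every $0 \leq j \leq n-1$.

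For the upper bound, Gershgorin by itself is insufficient: one verifies directly that $\tilde{A}_{22} + R_2 > 0$, so the union of Gershgorin disks already crosses into the positive half-line. Here I would instead exploit the structural fact that $M$ is the Gram matrix of $\{\tilde{C}_j^{(3/2)}\}$ with respect to the weight $(1-x^2)^2$ on $[-1,1]$ and is therefore symmetric positive definite. Since $(-D)^{1/2}\,\tilde{A}\,(-D)^{1/2} = -M$ is a congruence, Ostrowski's theorem gives $\lambda_{\max}(\tilde{A}) \leq -\lambda_{\min}(M)/\lambda_{\max}(-D) = -\lambda_{\min}(M)/(n(n+1))$, and it suffices to establish $\lambda_{\min}(M) \geq 1/(15n^2)$. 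By the Rayleigh-quotient characterization and the orthonormality of $\tilde{C}_j^{(3/2)}$ against the weight $1-x^2$, this is equivalent to the weighted polynomial inequality $\int_{-1}^1 p^2 (1-x^2)^2\,dx \geq (15n^2)^{-1} \int_{-1}^1 p^2 (1-x^2)\,dx$ for every polynomial $p$ of degree $\leq n-1$, which is a Markov-type statement expressing that such a polynomial cannot concentrate its mass in a boundary layer of width smaller than $O(1/n)$.

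The main obstacle will be this final polynomial estimate: Gershgorin cannot itself produce a strictly negative upper bound on the spectrum, so an auxiliary tool is necessary. I would prove the required inequality either by an explicit test-function argument against $(1+x)^{n-1}$ concentrating near $x = 1$, or more cleanly by invoking the integration-by-parts identity $\sum_j (j+1)(j+2)\,w_j^2 = \int_{-1}^1 \bigl([(1-x^2)p]'\bigr)^2\,dx$ furnished by the Sturm-Liouville equation~\eqref{eq:SecondOrderUltraS}, and applying a standard $L^2$ Markov-Bernstein inequality to the auxiliary polynomial $q = (1-x^2)p$ of degree $\leq n+1$ vanishing at $\pm 1$.
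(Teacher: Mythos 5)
Your computation of $\tilde{A}$'s entries, the choice $(D_s)_{jj}=1/\sqrt{(j+1)(j+2)}$, and the lower-bound Gershgorin argument are all consistent with what the paper does. But your central structural claim --- that ``Gershgorin cannot itself produce a strictly negative upper bound on the spectrum'' --- is incorrect, and this is precisely where you part ways with the paper's proof. You correctly observe that the Gershgorin disc for row $j=2$ of $\tilde{A}$ crosses the origin, but the paper does not apply Gershgorin to $\tilde{A}$ directly. It first decouples the even- and odd-indexed rows (since the first sub- and super-diagonals of $\tilde{A}$ vanish, so $\tilde{A}$ is permutation-similar to a block-diagonal matrix with two tridiagonal blocks $\tilde{A}_{\text{e,e}}, \tilde{A}_{\text{o,o}}$), and then performs a further diagonal similarity $S^{-1}\tilde{A}_{\text{e,e}}S$, $S^{-1}\tilde{A}_{\text{o,o}}S$ with $S_{ii}=i$. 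This weighted-Gershgorin trick rescales the off-diagonal entries so that the row sums $\tilde{A}_{jj}+R_j$ drop below a quantity of size $-c/n^4$ uniformly in $j$; the authors extract the explicit constant $1/30$ by a Mathematica-assisted Taylor expansion in $1/n$. So Gershgorin alone does finish the job, once one chooses the right similarity.

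Your alternative route --- identifying $M$ as the Gram matrix of $\{\tilde{C}_j^{(3/2)}\}$ under the weight $(1-x^2)^2$, applying Ostrowski's inertia theorem to the congruence $(-D)^{1/2}\tilde{A}(-D)^{1/2}=-M$, and reducing to $\lambda_{\min}(M)\geq 1/(15n^2)$ --- is genuinely different and conceptually attractive: it replaces a delicate entrywise computation by a structural fact about orthogonal polynomials. The arithmetic checks out: $\lambda_{\max}(-D)=n(n+1)$, and $1/(15n^2)\geq (n+1)/(30n^3)$ for $n\geq 1$, so the claimed Markov-type inequality would indeed yield $-1/(30n^4)$. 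The gap is that this Markov-type inequality, $\int_{-1}^1 p^2(1-x^2)^2\,dx \geq (15n^2)^{-1}\int_{-1}^1 p^2(1-x^2)\,dx$ for $\deg p\leq n-1$, is the entire difficulty and is only sketched: you name two possible strategies (a test-function argument near $x=1$, or the Sturm--Liouville identity plus a Markov--Bernstein bound on $q=(1-x^2)p$), but neither is carried through to an explicit constant. The rate $O(1/n^2)$ is surely correct --- it is governed by the extreme Gauss--Gegenbauer node, $1-x_{\max}^2\sim j_{1,1}^2/n^2$ --- but the specific constant $1/15$ must be established for the lemma to hold as stated. Until that inequality is proved, your argument establishes negative definiteness of $\tilde{A}$ (a qualitative statement, via Sylvester's law of inertia) but not the quantitative separation from zero that the ADI iteration count in section~\ref{sec:PoissonSquare} actually relies on.
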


\begin{proof}
If $n = 1$, then $\tilde{A}=-2/5$ and~\eqref{eq:SpectrumBound} trivially holds. For the remainder of the proof we assume that $n>1$. Moreover, $\tilde{A}$ is a real symmetric matrix so we know that $\sigma(\tilde{A})\subset\mathbb{R}$.

To apply Theorem \ref{thm:Gershgorin} we need the entries of $\tilde{A}$. In section~\ref{sec:PoissonSquare}, $\tilde{A}$ is defined as the symmetric matrix such that $\tilde{A} = D_s^{-1} A D_s$ for some diagonal matrix $D_s$. We have analytical formulas for the entries of $A$, and can therefore derive the diagonal entries of $D_s$. Hence, we can write down explicit expressions for the entries of $\tilde{A}$.\footnote{\label{foot:Mathematica}We omit the formulas for the entries because they are cumbersome, and instead use \emph{Mathematica} to perform the algebraic manipulations. The \emph{Mathematica} code is publicly available~\cite{GithubRepo}.}

Since $\tilde{A}$ is a pentadiagonal matrix with zero sub- and super-diagonals, the even and odd entries of the matrix decouple. That is,
\[
\tilde{A} = P^{-1} \begin{bmatrix} \tilde{A}_\text{e,e} & 0 \\ 0 & \tilde{A}_\text{o,o} \end{bmatrix} P, \qquad P = \begin{bmatrix} I_\text{e,:} \\ I_\text{o,:} \end{bmatrix},
\]
where $I$ is the identity matrix and ``e" and ``o" denote the even- and odd-indexed entries, respectively. The decoupling means that $\sigma(\tilde{A}) = \sigma(\tilde{A}_\text{e,e}) \cup \sigma(\tilde{A}_\text{o,o})$ and~\eqref{eq:SpectrumBound} follows from bounding $\sigma(\tilde{A}_\text{e,e})$ and $\sigma(\tilde{A}_\text{o,o})$ separately.

Since two similar matrices have the same eigenvalues, we know that $\sigma(\tilde{A}_{\text{e,e}}) = \sigma(S^{-1}\tilde{A}_{\text{e,e}}S)$ and $\sigma(\tilde{A}_{\text{o,o}}) = \sigma(S^{-1}\tilde{A}_{\text{o,o}}S)$ for the diagonal matrix $S$ with $S_{ii} = i$. By applying Theorem~\ref{thm:Gershgorin} to $S^{-1}\tilde{A}_{\text{e,e}}S$ 
and $S^{-1}\tilde{A}_{\text{o,o}}S$, we can calculate explicit formulas for bounds on the maximum and minimum eigenvalues of $\tilde{A}_{\text{e,e}}$ and $\tilde{A}_{\text{o,o}}$. We obtain simplified bounds on these formulas by doing a Taylor series expansion about $n = \infty$ and using Taylor's theorem to bound the truncation error.\footnote{Again, we use \emph{Mathematica} to perform the Taylor expansion and to bound the truncation error. In particular, we employ \emph{Mathematica}'s symbolic inequality solver to verify the stated bounds.} We find that 
\[
\lambda_{\max}(\tilde{A}) < -\frac{3}{64n^4} + \frac{1}{64n^5} < -\frac{1}{30n^4}, \qquad \lambda_{\min}(\tilde{A}) > -1,
\]
where $\lambda_{\max}(\tilde{A})$ and $\lambda_{\min}(\tilde{A})$ denote the maximum and minimum eigenvalue of $\tilde{A}$, respectively. 
\end{proof}

\end{document}